\documentclass[12pt]{amsart}
 
\pdfoutput=1
\usepackage[margin=1in]{geometry}
\usepackage{amsmath,amsthm,amssymb,amsrefs,bbm,color,esint,esvect,float,graphicx,mathrsfs}

\usepackage{epstopdf}
\AppendGraphicsExtensions{.tif}

 \newtheorem{thm}{Theorem}[section]
  \newtheorem{remark}[thm]{Remark}
 \newtheorem{lemma}[thm]{Lemma}

\newtheorem{defin}[thm]{Definition}
\newtheorem{example}[thm]{Example}
\newtheorem{open}[thm]{Open Question}

\begin{document}

\title[Dimensional weak-type $(1,1)$ bound for Riesz transforms]{On the dimensional weak-type $(1,1)$ bound for Riesz transforms}
\author{Daniel Spector}
\address{Daniel Spector\hfill\break\indent 
Nonlinear Analysis Unit\hfill\break\indent 
Okinawa Institute of Science and Technology Graduate University\hfill\break\indent
1919-1 Tancha, Onna-son, Kunigami-gun\hfill\break\indent 
Okinawa, Japan}
\email{daniel.spector@oist.jp}
\author{Cody B. Stockdale}
\address{Cody B. Stockdale\hfill\break\indent 
 Department of Mathematics and Statistics\hfill\break\indent 
 Washington University in St. Louis\hfill\break\indent 
 One Brookings Drive\hfill\break\indent 
 St. Louis, MO, 63130 USA}
\email{codystockdale@wustl.edu}

\begin{abstract}
Let $R_j$ denote the $j^{\text{th}}$ Riesz transform on $\mathbb{R}^n$. We prove that there exists an absolute constant $C>0$ such that
\begin{align*}
	|\{|R_jf|>\lambda\}|\leq C\left(\frac{1}{\lambda}\|f\|_{L^1(\mathbb{R}^n)}+\sup_{\nu} |\{|R_j\nu|>\lambda\}|\right)
\end{align*}
for any $\lambda>0$ and $f \in L^1(\mathbb{R}^n)$, where the above supremum is taken over measures of the form $\nu=\sum_{k=1}^Na_k\delta_{c_k}$ for $N \in \mathbb{N}$, $c_k \in \mathbb{R}^n$, and $a_k \in \mathbb{R}^+$ with $\sum_{k=1}^N a_k \leq 16\|f\|_{L^1(\mathbb{R}^n)}$.  This shows that to establish dimensional estimates for the weak-type $(1,1)$ inequality for the Riesz tranforms it suffices to study the corresponding weak-type inequality for Riesz transforms applied to a finite linear combination of Dirac masses. We use this fact to give a new proof of the best known dimensional upper bound, while our reduction result also applies to a more general class of Calder\'on-Zygmund operators.
\end{abstract}

\maketitle

\section{Introduction}

Let $\mathbb{R}^n$ denote the Euclidean space of $n$ dimensions and, for $j \in \{1,2,\ldots,n\}$, define the $j^{\text{th}}$ Riesz transform of a function $f \in C^\infty_c(\mathbb{R}^n)$ by 
\begin{align*}
	R_jf(x):=\frac{\Gamma\left(\frac{n+1}{2}\right)}{\pi^{\frac{n+1}{2}}} \;p.v. \int_{}\frac{x_j-y_j}{|x-y|^{n+1}}f(y)\,dy.
\end{align*}

It is a classical result of Calder\'on and Zygmund \cite{CZ} that the Riesz transforms extend as bounded operators on $L^p(\mathbb{R}^n)$ for $1<p<+\infty$ and from $L^1(\mathbb{R}^n)$ into weak $L^1(\mathbb{R}^n)$.  More precisely, they show that for $1<p<+\infty$, one has the strong-type $(p,p)$ inequality
\begin{align}
\|R_j f\|_{L^p(\mathbb{R}^n)} \leq C_p(n) \|f\|_{L^p(\mathbb{R}^n)} \label{lp}
\end{align}
for all $f \in L^p(\mathbb{R}^n)$, while if $p=1$, their results imply the weak-type $(1,1)$ inequality
\begin{align}
\sup_{\lambda>0} \lambda |\{ |R_j f|>\lambda \}| \leq C_1(n) \|f\|_{L^1(\mathbb{R}^n)}\label{wl1}
\end{align}
for all $f \in L^1(\mathbb{R}^n)$.

The method in \cite{CZ} is to first establish a slight variant of the inequality \eqref{wl1} and then to prove \eqref{lp} by an interpolation argument.  The proof of \eqref{wl1}, in turn, is argued by the (subsequently termed) Calder\'on-Zygmund decomposition, from which one obtains an exponential dependence in the dimension of the constant $C_1(n)$.  Naturally, in this argument $C_p(n)$ inherits this dependence.  However, the constants $C_p(n)$ can actually be taken to be dimension free, as was first shown by E. M. Stein in \cite{Stein83} (and can even be explicitly computed, see \cite{IwaniecMartin}).  It was a question of Stein  \cite[Problem b on p.~203]{Stein86} whether the constant $C_1(n)$ can also be taken to be dimension free.   At present the best result in this direction is that of P. Janakiraman, who showed in \cite{J2004} that \eqref{wl1} holds with $C_1(n)=c \log(n)$ for some absolute constant $c>0$.

These questions parallel a similar line of research concerning dimensional estimates for maximal functions, including the centered Hardy-Littlewood maximal function:
\begin{align*}
\mathcal{M}f(x):= \sup_{r>0} \fint_{B(x,r)} |f(y)|\;dy.
\end{align*}
In particular, it was asserted by Stein in \cite{Stein82} that if $1< p \leq +\infty$, then
\begin{align*}
\|\mathcal{M}f\|_{L^p(\mathbb{R}^n)} \leq C'_p \|f\|_{L^p(\mathbb{R}^n)}
\end{align*}
for all $f \in L^p(\mathbb{R}^n)$, with a constant $C'_p>0$ independent of $n$.  The proof of this fact appeared in a subsequent paper in collaboration with J. O. Str\"omberg, \cite{SteinStromberg}. Here, they also proved the dimensional weak-type $(1,1)$ estimate
\begin{align}
\sup_{\lambda>0} \lambda |\{ \mathcal{M}f>\lambda \}| \leq C'_1(n)\|f\|_{L^1(\mathbb{R}^n)} \label{mwl1}
\end{align}
for all $f \in L^1(\mathbb{R}^n)$, where $C'_1(n)=cn$ for some absolute constant $c>0$. 

At present, it remains unknown whether the linear dependence in \eqref{mwl1} is optimal.  One possible approach to an improvement to the result of Stein and Str\"omberg would be to establish a dimensional bound in the inequality
\begin{align}\label{hlpointmasses}
\sup_{\lambda>0} \lambda |\{ \mathcal{M}\nu>\lambda \}| \leq C'_1(n)\|\nu\|_{M_b(\mathbb{R}^n)}
\end{align}
over all bounded measures $\nu$ of the form $\nu = \sum_{k=1}^N \delta_{c_k}$ for any $N \in \mathbb{N}$ and $c_k \in \mathbb{R}^n$, where $\mathcal{M}\nu(x) := \sup_{r>0}\frac{|\nu|(B(x,r))}{|B(x,r)|}$ and $\|\nu\|_{M_b(\mathbb{R}^n)}$ denotes the total variation of $\nu$.  Indeed, by a result of M. de Guzm\'an \cite[Theorem 4.1.1]{deguzman} the two constants are comparable (and can even be taken to be the same, see \cite{TMSoria}).  This perspective has proven useful in obtaining lower bounds, that is, in ruling out the possibility of a dimension free constant, for the centered maximal function associated to cubes in $\mathbb{R}^n$.  In particular, in \cite{A2011} J. M. Aldaz  establishes that the weak-type $(1,1)$ bound for this operator tends to infinity with the dimension by considering the operator applied to Dirac masses (see also A. S. Iakovlev and Str\"omberg \cite{IS2013}, who subsequently improved Aldaz's result with the explicit estimate $C_1'(n) \geq cn^{\frac{1}{4}}$).   In general, as the study of such estimates on sums of Dirac masses presents the possibility for more explicit computations, the inequality \eqref{hlpointmasses} seems to be a simpler formulation of the problem of understanding dimensional bounds.


The main result of this paper is an analogue of de Guzm\'an's result for the Riesz transforms, the following
\begin{thm}\label{pointmassreduction}
There exists an absolute constant $C>0$ such that
\begin{align*}
	|\{|R_jf|>\lambda\}|\leq C\left( \frac{1}{\lambda}\|f\|_{L^1(\mathbb{R}^n)}+\sup_{\nu} |\{|R_j\nu|>\lambda\}|\right)
\end{align*}
for any $\lambda>0$ and $f \in L^1(\mathbb{R}^n)$, where the above supremum is taken over measures of the form $\nu=\sum_{k=1}^Na_k\delta_{c_k}$ for $N \in \mathbb{N}$, $c_k \in \mathbb{R}^n$, and $a_k \in \mathbb{R}^+$ with $\sum_{k=1}^N a_k \leq 16\|f\|_{L^1(\mathbb{R}^n)}$.
\end{thm}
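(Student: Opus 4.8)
The plan is to run the classical Calder\'on-Zygmund argument for the weak-type $(1,1)$ bound --- a Calder\'on-Zygmund decomposition, an $L^2$ estimate on the good part, and a kernel estimate on the bad part --- but arranged so that the "bad" function one would normally discard is replaced by a sum of Dirac masses and, crucially, so that every constant produced is independent of $n$. Normalize $\|f\|_1=1$ and fix $\lambda>0$. First I would take a Calder\'on-Zygmund-type decomposition of $|f|$ at height $\lambda$: a family of pairwise disjoint cubes (or balls) $\{Q\}$ with $\sum_Q|Q|\lesssim\lambda^{-1}$ on which the average of $|f|$ is comparable to $\lambda$, chosen so that the good part $g:=f\mathbf 1_{(\cup_Q Q)^c}$ satisfies $\|g\|_\infty\lesssim\lambda$ and $\|g\|_1\le1$; keeping this last bound dimension-free (rather than the dyadic $2^n\lambda$) requires some care, e.g.\ via a selection in which comparing the radii $r$ and $(1+\tfrac{1}{n})r$ forces the selected average into $[\lambda,e\lambda)$. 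Then replace each $f\mathbf 1_Q$ by $a_Q\delta_{c_Q}$ with $a_Q:=\int_Q f$ and $c_Q$ the center of $Q$; set $\nu:=\sum_Q a_Q\delta_{c_Q}$ and $e:=\sum_Q e_Q$, $e_Q:=f\mathbf 1_Q-a_Q\delta_{c_Q}$, so that $e_Q$ has total mass zero, is supported in $\overline Q$, and $\|e_Q\|_{M_b}\le2\int_Q|f|$. Thus $R_jf=R_jg+R_j\nu+R_je$.

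The good part is immediate: by Plancherel, since the Fourier multiplier of $R_j$ has modulus at most $1$, $\|R_jg\|_2\le\|g\|_2$, whence $\|R_jg\|_2^2\le\|g\|_\infty\|g\|_1\lesssim\lambda$ and $|\{|R_jg|>\lambda/3\}|\lesssim\lambda^{-1}$. For $\nu$, write $\nu=\nu^+-\nu^-$ with $\nu^\pm$ sums of positive point masses and $\|\nu^+\|_{M_b}+\|\nu^-\|_{M_b}\le\|f\|_1$; then
\[
|\{|R_j\nu|>\lambda/3\}|\le|\{|R_j(6\nu^+)|>\lambda\}|+|\{|R_j(6\nu^-)|>\lambda\}|\le 2\sup_\nu|\{|R_j\nu|>\lambda\}|,
\]
since $\|6\nu^\pm\|_{M_b}\le 6\|f\|_1\le16\|f\|_1$ --- this rescaling is exactly what the budget $16$ in the statement accommodates. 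Everything therefore reduces to proving, with an absolute constant,
\[
\Big|\Big\{\ \Big|\sum_Q R_je_Q\Big|>\tfrac{\lambda}{3}\ \Big\}\Big|\ \lesssim\ \sum_Q|Q|\ \lesssim\ \lambda^{-1}.
\]

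Away from the cubes one has the usual mechanism: since each $e_Q$ has mean zero and is supported in $Q$, for $\mathrm{dist}(x,Q)\gtrsim\mathrm{diam}(Q)$ cancellation gives $|R_je_Q(x)|\lesssim\tfrac{\Gamma((n+1)/2)}{\pi^{(n+1)/2}}\,n\,\mathrm{diam}(Q)\,\mathrm{dist}(x,Q)^{-n-1}\|e_Q\|_{M_b}$, and integrating in $x$ contributes the surface measure $\omega_{n-1}$ of $S^{n-1}$; since $\tfrac{\Gamma((n+1)/2)}{\pi^{(n+1)/2}}\,\omega_{n-1}=\tfrac{2}{\sqrt\pi}\,\Gamma(\tfrac{n+1}{2})/\Gamma(\tfrac{n}{2})\asymp\sqrt n$, a crude term-by-term estimate leaks a dimensional factor, so one must exploit either the genuine cancellation in $\sum_Q R_je_Q=R_jf-R_jg-R_j\nu$ or the precise structure of $K_j$. (As encouragement, note that the "spikes" $\nu$ itself introduces are cheap: $R_j$ of a single mass $a$ at a point satisfies $|\{\,\cdot\,>\lambda\}|=a\,V_n\,\tfrac{\Gamma((n+1)/2)}{\pi^{(n+1)/2}}\,\lambda^{-1}$ with $V_n\,\tfrac{\Gamma((n+1)/2)}{\pi^{(n+1)/2}}\le\sqrt{2/(\pi n)}$, so $\sum_Q|\{|R_j(a_Q\delta_{c_Q})|>\lambda\}|\lesssim\lambda^{-1}\|f\|_1$ absolutely.)

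The genuine obstacle is the region \emph{near} each cube, where the estimate above is worthless and where one is not allowed to enlarge $Q$ by any fixed factor --- that multiplies $|Q|$, hence $\sum_Q|Q|$, by a dimensional constant --- only an enlargement of the affordable size $1+\tfrac{1}{n}$ being admissible. The resolution I would pursue is that the point masses already take care of this region: on a neighborhood of $Q$ of the natural size, $|R_j\nu(x)|\gtrsim a_Q\,\tfrac{\Gamma((n+1)/2)}{\pi^{(n+1)/2}}\,|(x-c_Q)_j|\,|x-c_Q|^{-n-1}$ already exceeds $\lambda$ for every $x$ outside a thin "equatorial" slab $\{|(x-c_Q)_j|\ \text{small}\}$, and on that slab the error $R_je_Q(x)=\int_Q[K_j(x-y)-K_j(x-c_Q)]\,de_Q(y)$ is itself small because $K_j(x-y)=\tfrac{\Gamma((n+1)/2)}{\pi^{(n+1)/2}}(x-y)_j|x-y|^{-n-1}$ is controlled by $|(x-y)_j|$, which is small there for the bulk of $y\in Q$. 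Making this dichotomy quantitative with constants genuinely free of $n$, and summing it correctly over all the cubes at once --- so that the $\Gamma$-function normalization of $R_j$ exactly balances the volume factors --- is the heart of the argument and where I expect the real work to lie; the good part and the point-mass term are comparatively routine.
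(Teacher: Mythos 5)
You have correctly assembled the outer frame of the argument (CZ-type decomposition, dimension-free $L^2$ bound on the good part, replacement of each bad piece by a point mass, the $\nu^{\pm}$ splitting that produces the factor $16$), but the step you yourself flag as ``the heart of the argument'' --- bounding $|\{|\sum_Q R_je_Q|>\lambda/3\}|$ with an absolute constant --- is left undone, and the route you sketch for it is not the one that works. That is a genuine gap, not a routine verification.

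Two specific problems. First, your proposed dichotomy near each cube (that $|R_j\nu|$ already exceeds $\lambda$ off a thin equatorial slab, so the error only matters on the slab) is fragile: it presumes $a_Q\gtrsim\lambda|Q|$, which fails when $f$ changes sign on $Q$, and it ignores interference between the point masses of different cubes; no quantitative version is given. Second, and more importantly, the actual resolution does not require any pointwise analysis near the cubes. The trick is to take the exceptional set to be $U=\{f>\lambda\}$ itself (after reducing to $f\ge 0$), whose measure is already $\le\lambda^{-1}\|f\|_{1}$ by Chebyshev --- so ``the region near each cube'' is discarded at no dimensional cost, with no enlargement of the cubes at all. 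One then decomposes $U$ by a \emph{modified} Whitney decomposition in which $(2n-1)\,\mathrm{diam}(Q_k)\le\mathrm{dist}(Q_k,\mathbb{R}^n\setminus U)$, so that every point outside $U$ is at distance $\ge n\,\mathrm{diam}(Q_k)$ from $Q_k$. The dimensional leak you identify in the far-field estimate (the $\sqrt{n}$ from the gradient bound times the surface measure) is then killed not by extra cancellation in $R_jf-R_jg-R_j\nu$ but by Janakiraman's observation that the H\"ormander integral taken over the $n$-fold dilate,
\begin{align*}
\int_{|x|>n|y|}|K(x-y)-K(x)|\,dx\ \le\ C\,\|\Omega\|_{L^1(S^{n-1})},
\end{align*}
has an absolute constant: enlarging the excluded ball from $2|y|$ to $n|y|$ gains a factor $1/n$ in the radial integral that exactly compensates the dimensional growth. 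With that lemma, the term-by-term estimate $\sum_k\int_{\mathbb{R}^n\setminus U}|T(b_k\,dm-a_k\delta_{c_k})|\le C\,\|\Omega\|_{L^1(S^{n-1})}\|b\|_{1}$ closes the argument. Without this (or an equivalent substitute), your proof does not go through.
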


\noindent
Above, $R_j\nu(x):= \frac{\Gamma\left(\frac{n+1}{2}\right)}{\pi^{\frac{n+1}{2}}} \;p.v. \int_{}\frac{x_j-y_j}{|x-y|^{n+1}}\,d\nu(y)$. Theorem \ref{pointmassreduction} says that to establish a dimensional weak-type $(1,1)$ estimate for $R_j$, it suffices to prove such an estimate for the operator applied to a finite linear combination of Dirac masses.

\begin{remark}
Theorem \ref{pointmassreduction} actually holds for a more general class of singular integral operators including the second order Riesz transforms; see the precise assumptions in Section \ref{preliminaries} and more general result given in Theorem \ref{pointmassreduction1} below.
\end{remark}

Our proof of Theorem \ref{pointmassreduction} is based on the approach of F. Nazarov, S. Treil, and A. Volberg in \cite{NTV1998}, and builds upon the further work of the second named author in \cites{GS2019, S2020, S2019, SW2019}.  While a direct application of these arguments yields exponential growth in the dimension, we here make suitable modifications and a careful accounting to remove this dependence.  One can also recover the dimensional dependence proved by Janakiraman in \cite{J2004} by our



\begin{thm}\label{pointmassweaktype}
There exists an absolute constant $C>0$ such that 
\begin{align*}
	\sup_{\lambda>0}\lambda|\{|R_j\nu|>\lambda\}| \leq C\log(n) \|\nu\|_{M_b(\mathbb{R}^n)}
\end{align*}
for all measures $\nu$ of the form $\nu = \sum_{k=1}^N a_k\delta_{c_k}$ with $c_k \in \mathbb{R}^n$ and $a_k \in \mathbb{R}^+$.
\end{thm}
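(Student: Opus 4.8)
Granting Janakiraman's bound~\eqref{wl1}, Theorem~\ref{pointmassweaktype} is immediate (mollify $\nu$ into $L^1$ and send the mollification parameter to $0$ with Fatou's lemma); the point is rather a proof that does \emph{not} use~\eqref{wl1}, so that Theorems~\ref{pointmassreduction} and~\ref{pointmassweaktype} together furnish a new route to~\eqref{wl1}. I would prove Theorem~\ref{pointmassweaktype} by a Calder\'on--Zygmund decomposition of the positive atomic measure $\nu$ at level $\lambda$, arranging that every step which classically costs a factor $2^n$ is rerouted through two dimension-free features of the Riesz kernel $K_j(x)=c_n\,x_j|x|^{-n-1}$, $c_n=\Gamma((n+1)/2)\,\pi^{-(n+1)/2}$. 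The first is $\|R_j\|_{L^2\to L^2}\le 1$, from $\sum_{j}|\widehat{R_jf}|^2=|\widehat f|^2$. The second is that the oscillation of $K_j$ on the sphere exactly compensates the dimensional growth $c_n\,|\mathbb{S}^{n-1}|\asymp\sqrt n$, in the quantitative form
\[
\int_{t\le|x|\le 2t}|K_j(x)|\,dx
\;=\;c_n\Bigl(\int_{\mathbb{S}^{n-1}}|\theta_j|\,d\sigma(\theta)\Bigr)\log 2
\;=\;\frac{2\log 2}{\pi}
\]
for every $t>0$ and every $n$. After normalizing $\|\nu\|_{M_b(\mathbb{R}^n)}=1$, this identity is the engine of the whole estimate.

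Fix $\lambda>0$. Since $\mathcal{M}\nu(c_k)=+\infty$ at every atom of $\nu$, the entire mass of $\nu$ lies in $\Omega:=\{\mathcal{M}\nu>\lambda\}$. Decompose $\Omega$ into an almost disjoint family $\{Q_i\}$ and write $\nu=g+\sum_i b_i$ in the usual way, with $g$ the averaged part, each $b_i$ supported in $Q_i$ with $\int b_i=0$ and $\|b_i\|_{M_b}\lesssim\lambda|Q_i|$, and $\sum_i|Q_i|\lesssim\lambda^{-1}$. Because every atom of $\nu$ lies in $\Omega$, the function $g$ is genuinely bounded, with $\|g\|_{L^\infty}\lesssim\lambda$ and $\|g\|_{L^1}\lesssim 1$; obtaining these two bounds with constants independent of $n$ is the point at which one must handle the cube/ball geometry with care, and is where the framework of Section~\ref{preliminaries} is needed. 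The good part is then handled at once and dimension-freely:
\[
\bigl|\{|R_jg|>\lambda/2\}\bigr|
\;\le\;\frac{4}{\lambda^2}\,\|R_jg\|_{L^2}^2
\;\le\;\frac{4}{\lambda^2}\,\|g\|_{L^2}^2
\;\le\;\frac{4}{\lambda^2}\,\|g\|_{L^\infty}\|g\|_{L^1}
\;\lesssim\;\frac{1}{\lambda}.
\]

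For the bad part the mean-zero property of $b_i$ lets one replace $K_j(x-y)$ by $K_j(x-c_i)$ for $y\in Q_i$. The difficulty in high dimensions is that the smoothness of $K_j$ yields a dimension-free estimate $\int_{|x-c_i|\ge n\operatorname{diam}Q_i}|K_j(x-y)-K_j(x-c_i)|\,dx\le C$ (absolute $C$) only once one is a factor $\asymp n$ past $Q_i$; on the band $2\operatorname{diam}Q_i\le|x-c_i|\le n\operatorname{diam}Q_i$ one must instead use the size bound $|K_j(x)|\le c_n|x|^{-n}$ dyadic annulus by dyadic annulus, and the first displayed identity shows each of those $\asymp\log n$ annuli contributes $O(1)$. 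Summing the scales produces a "modified H\"ormander constant" of size $\lesssim\log n$, so $\int_{|x-c_i|\ge 2\operatorname{diam}Q_i}|R_jb_i|\,dx\lesssim(\log n)\|b_i\|_{M_b}$, whence, summing in $i$,
\[
\Bigl|\Bigl\{x:\textstyle\sum_i\mathbf 1_{\{|x-c_i|\ge 2\operatorname{diam}Q_i\}}\,|R_jb_i(x)|>\tfrac{\lambda}{4}\Bigr\}\Bigr|\;\lesssim\;\frac{\log n}{\lambda}.
\]
What remains is the part of $\{|R_j\sum_i b_i|>\lambda/2\}$ lying within $2\operatorname{diam}Q_i$ of some $c_i$. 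This set cannot be discarded by counting volume, since $\bigl|B(c_i,2\operatorname{diam}Q_i)\bigr|$ exceeds $|Q_i|$ by an exponential factor, and controlling it is the \emph{genuine obstacle}: here I would proceed as in the Nazarov--Treil--Volberg scheme and its refinements in~\cites{GS2019,S2020,S2019,SW2019} and Theorem~\ref{pointmassreduction1}, localizing and re-running an $L^2$/stopping-time estimate rather than removing a full neighborhood of $\bigcup_i Q_i$ outright. (A mollification of $\nu$ together with Fatou's lemma makes all of the bad-part manipulations legitimate despite $R_j\nu$ failing to be locally integrable near the atoms.) Adding the three contributions gives $|\{|R_j\nu|>\lambda\}|\lesssim(\log n)/\lambda$, which is the assertion after undoing the normalization; everything except that last step is bookkeeping organized around the $n^{-1/2}$ gain in the first displayed identity.
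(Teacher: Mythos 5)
Your outline correctly isolates the two dimension\--free inputs (the $L^2$ bound and the identity $\int_{t\le|x|\le 2t}|K_j|\,dx=\tfrac{2\log 2}{\pi}$) and the correct source of the $\log n$: on the band between radius $r$ and $nr$ around an atom only the size bound is available, and the $\asymp\log n$ dyadic annuli there each contribute $O(1)$, while beyond radius $nr$ the H\"ormander\--type estimate (Lemma~\ref{Hormander}) is dimension free. This matches the paper's term $\mathrm{II}$. But there are two genuine gaps. First, your decomposition is the wrong one. A Calder\'on--Zygmund stopping\--time decomposition of the atomic measure $\nu$ at height $\lambda$ gives $\|g\|_{L^\infty}\le 2^n\lambda$ (the averaging over a dyadic parent), and replacing dyadic cubes by balls via a covering lemma reintroduces a $3^n$-type overlap constant; your assertion $\|g\|_{L^\infty}\lesssim\lambda$ with an absolute constant is exactly the classical dimensional loss the theorem is trying to avoid, and you flag it as ``needing care'' without resolving it. The paper does not decompose $\nu$ at all: following Nazarov--Treil--Volberg, it replaces each atom $a_k\delta_{c_k}$ by $\lambda\chi_{E_k}$, where $E_k\subseteq B(c_k,r_k)$ is carved out disjointly with $|E_k|=a_k/\lambda$ \emph{exactly}, so the ``good part'' is bounded by $\lambda$ and has $L^1$ norm $\|\nu\|_{M_b}$ by construction, with no stopping time and no dimensional constant.

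Second, you explicitly defer the hardest step --- the set within $O(\mathrm{diam}\,Q_i)$ of the atoms, which cannot be discarded by volume counting --- with ``here I would proceed as in the Nazarov--Treil--Volberg scheme.'' That is precisely the paper's term $\mathrm{III}$, the level set $\{|h|>1/2\}$ of the auxiliary function $h=\sum_k\chi_{\mathbb{R}^n\setminus B(c_k,nr_k)}T\chi_{E_k}$, and it requires a specific argument: exhaust $\{h>1/4\}$ by compact sets $F$, dualize $\int_F h$ onto $\sum_k\int_{E_k}T^*\chi_{F\setminus B(c_k,nr_k)}$, compare the excised ball $B(c_k,nr_k)$ with the centered ball $B(x,(n-1)r_k)$ at cost $\log 3\cdot\|\Omega\|_{L^1(S^{n-1})}$ per point, and then invoke the dimension\--free $L^2$ bound for the \emph{maximal truncation} $(T^*)^{\#}$ (Lemma~\ref{dimensionlessl2}, via the method of rotations, using that $\Omega$ is odd) together with Cauchy--Schwarz to close a self\--improving inequality $\tfrac14|F|\lesssim|E|+|E|^{1/2}|F|^{1/2}$. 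None of this is in your write\--up, and without it the proof is not complete; naming the scheme is not the same as executing it.
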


In light of Stein's dimensionless weak-type $(1,1)$ question for $R_j$ from \cite{Stein86}, this naturally leads one to pose
\begin{open}\label{dimensionlesspointmass}
Does there exist an absolute constant $C>0$ such that 
\begin{align*}
	\sup_{\lambda>0}\lambda|\{|R_j\nu|>\lambda\}| \leq C \|\nu\|_{M_b(\mathbb{R}^n)}
\end{align*}
for all $\nu \in M_b(\mathbb{R}^n)$ of the form $\nu = \sum_{k=1}^N a_k\delta_{c_k}$?
\end{open}
\noindent
In particular, a solution to Open Question \ref{dimensionlesspointmass} together with Theorem \ref{pointmassreduction} would imply an affirmative answer to Stein's question. 

This reduction to the study of Riesz transforms applied to Dirac masses - for which one has explicit formulas in terms of rational functions -  leads to some interesting phenomena.  For example, one finds that in the case $\nu=a\delta_c$,
\begin{align*}
R_j\nu(x) = \frac{\Gamma\left(\frac{n+1}{2}\right)}{\pi^{\frac{n+1}{2}}} a\frac{x_j-c_j}{|x-c|^{n+1}},
\end{align*}
and therefore
 \begin{align*}
 |\{|R_j\nu|>\lambda\}| &\leq \bigg|\bigg\{\frac{\Gamma\left(\frac{n+1}{2}\right)}{\pi^{\frac{n+1}{2}}} \frac{|a|}{|x-c|^n}>\lambda\bigg\}\bigg|
 \\&=|B(0,1)|\frac{\Gamma\left(\frac{n+1}{2}\right)}{\pi^{\frac{n+1}{2}}} \frac{1}{\lambda}\|\nu\|_{M_b(\mathbb{R}^n)}
\end{align*}
for any $\lambda>0$.  A simple computation (see \cite{J2004}, p.~553) then shows that
\begin{align*}
|B(0,1)|\frac{\Gamma\left(\frac{n+1}{2}\right)}{\pi^{\frac{n+1}{2}}} =  \frac{2\pi^{n/2}}{n\Gamma\left(\frac{n}{2}\right)}\frac{\Gamma\left(\frac{n+1}{2}\right)}{\pi^{\frac{n+1}{2}}} \approx \frac{1}{\sqrt{n}}
\end{align*} 
for $n$ large, and so the bound tends to zero as $n$ tends to infinity in the case of one Dirac mass!  Note that this is in contrast to the case of the Hardy-Littlewood maximal function, where one has constant dependence on the dimension for a single Dirac mass.

 Of course, we must understand what happens when there are multiple Dirac masses, though the geometry quickly becomes quite complicated.  The question in one dimension may yield some insight into the effects of cancellation.  In particular, in the case $n=1$ and $\nu=a_1\delta_{c_1}+a_2\delta_{c_2}$ for $a_1,a_2 >0$ (we can always take $a_k>0$ by separating the positive and negative terms and doubling the constant), one can explicitly compute the level sets of $H\nu$ (as $R_1=H$, the Hilbert transform) and show
\begin{align}\label{additive}
|\{|H\nu|>\lambda\}| =\frac{2}{\pi} \frac{1}{\lambda}\|\nu\|_{M_b(\mathbb{R}^n)} =  \frac{|B(0,1)|}{\pi} \frac{1}{\lambda}\|\nu\|_{M_b(\mathbb{R}^n)}
\end{align}
for any $\lambda>0$.  This is a simple calculation, though with only a slightly more subtle argument, such an equality -- independent of the number of Dirac masses -- had already been proved in 1946!  Precisely, in \cite{Loomis1946}  L. Loomis established the inequality \eqref{additive} for all $\nu \in M_b(\mathbb{R})$ of the form $\nu = \sum_{k=1}^N a_k\delta_{c_k}$ with  $a_k>0$.   It seems that a careful consideration of the geometry of Euclidean space may yield some insight into this question in higher dimensions, and from this of course, an answer to the question of Stein.

The plan of the paper is as follows. In Section \ref{preliminaries}, we introduce the class of operators we work with and discuss the main examples of Riesz transforms and second order Riesz transforms. In Section \ref{lemmas}, we collect some relevant lemmas. Finally, in Section \ref{mainresults}, we prove the main results.  We begin with a result more general than Theorem \ref{pointmassreduction}, our Theorem \ref{pointmassreduction1}, from which Theorem \ref{pointmassreduction} follows immediately.  We then conclude with a proof of Theorem \ref{pointmassweaktype}.


\section{Preliminaries}\label{preliminaries}

\begin{defin}\label{SIO}
Assume that $K:\mathbb{R}^n \setminus \{0\}\rightarrow \mathbb{C}$ satisfies $K(x)=\frac{\Omega(x)}{|x|^n}$, where $\Omega$ is a function such that
\begin{enumerate}
	\item 
		$$
			\Omega(x)=\Omega\bigg(\frac{x}{|x|}\bigg)=\Omega(\delta x),
		$$
		 for $x \neq 0$ and $\delta >0$,
	\item 
		$$
			\int_{S^{n-1}}\Omega(\theta)\,d\sigma(\theta)=0,
		$$ 
		where $\sigma$ denotes surface measure on $S^{n-1}$, and 
	\item there exists an absolute constant $C>0$ such that  
		$$
			\int_{S^{n-1}}|\Omega(\theta-\xi\delta)-\Omega(\theta)|\,d\sigma(\theta)\leq C n\delta\int_{S^{n-1}}|\Omega(\theta)|\,d\sigma(\theta)
		$$
	for $\xi \in S^{n-1}$ and $0<\delta<\frac{1}{n}$
\end{enumerate}
Define $T$ to be the singular integral operator associated to a kernel $K$ as described above:
$$
	Tf(x):=p.v.\int_{}K(x-y)f(y)\, dy \equiv \lim_{\varepsilon \to 0} \int_{|x-y| \geq \varepsilon}K(x-y)f(y)\, dy ,
$$
for $f \in C_c^{\infty}(\mathbb{R}^n)$. 
\end{defin}

\begin{example}\label{Riesz1}
The Riesz transforms $R_j$ are examples of such singular integral operators with 
\begin{align*}
\Omega(x)=\frac{\Gamma\left(\frac{n+1}{2}\right)}{\pi^{\frac{n+1}{2}}}\frac{x_j}{|x|}.
\end{align*}   
One can show that 
\begin{align*}
\int_{S^{n-1}}|\Omega(\theta)|\, d\sigma(\theta) = \frac{2}{\pi}
\end{align*}
and
\begin{align*}
			\int_{S^{n-1}}|\Omega(\theta-\xi\delta)-\Omega(\theta)|\,d\sigma(\theta)\leq C \sqrt{n}\delta\int_{S^{n-1}}|\Omega(\theta)|\,d\sigma(\theta)
\end{align*}
	for $\xi \in S^{n-1}$ and $0<\delta<\frac{1}{n}$ (see, e.g. \cite[p.~554]{J2004}).  We observe here that a slight improvement can be made in Janakiraman's computation.  In particular, one has
\begin{align*}
&\left| \frac{\partial}{\partial x_j} \frac{x_j}{|x|}\right| = \left| \frac{1}{|x|}-\frac{x_j^2}{|x|^3}\right|\quad\text{and} \\
&\left| \frac{\partial}{\partial x_i} \frac{x_j}{|x|}\right| = \left| \frac{x_ix_j}{|x|^3}\right|  \quad i \neq j,
\end{align*}
which implies
\begin{align*}
\left|\nabla \frac{x_j}{|x|}\right|^2 &= \sum_{i\neq j} \left| \frac{x_ix_j}{|x|^3}\right|^2 + \left| \frac{1}{|x|}-\frac{x_j^2}{|x|^3}\right|^2\\
&=\frac{x_j^2}{|x|^4}-\frac{x_j^4}{|x|^6}+\frac{1}{|x|^2}-\frac{2x_j^2}{|x|^4}+\frac{x_j^4}{|x|^4}\\
&=\frac{1}{|x|^2}-\frac{x_j^2}{|x|^4},
\end{align*}
and therefore
\begin{align*}
\left|\nabla \frac{x_j}{|x|}\right| \leq \frac{1}{|x|},
\end{align*}
without the need for $\sqrt{n}$ in the numerator as in \cite{J2004}.
\end{example}

\begin{example}\label{Riesz2}
The higher order Riesz transforms, $R_{ij}$, are also examples included in the above framework.  In particular, we compute
\begin{align*}
R_{ij}f(x):= \frac{1}{\gamma(2)} \lim_{\varepsilon \to 0} \int_{|x-y|\ge \varepsilon} \frac{\partial^2 }{\partial x_i x_j}|x-y|^{-n+2} f(y)\;dy
\end{align*}
where $\gamma(2)=\pi^{n/2}2^2/\Gamma(n/2-1)$.   The observation that 
\begin{align*}
\frac{\partial }{\partial x_j}|x-y|^{-n+2}&=(-n+2) |x-y|^{-n+1} \frac{x_j-y_j}{|x-y|}
\end{align*}
implies that
\begin{align*}
\frac{\partial^2 }{\partial x_i x_j}|x-y|^{-n+2} &= (-n+2)(-n) |x-y|^{-n-1} (x_j-y_j)\frac{x_i-y_i}{|x-y|}.
\end{align*}
for $i\neq j$. Meanwhile, in the case $i=j$, one has
\begin{align*}
\frac{\partial^2 }{\partial x^2_j}|x-y|^{-n+2} &= (-n+2)(-n) |x-y|^{-n-1} (x_j-y_j)\frac{x_j-y_j}{|x-y|}+(-n+2)|x-y|^{-n}.
\end{align*}
Therefore one obtains
\begin{align*}
R_{ij}f(x) = p.v.\; \int \frac{\Omega_{ij}(x-y)}{|x-y|^n}f(y)\;dy
\end{align*}
for
\begin{align*}
\Omega_{ij}(x) &= \frac{\Gamma(n/2+1)}{\pi^{n/2}}\frac{x_ix_j}{|x|^2} \quad i\neq j, \\
\Omega_{jj}(x) &= \frac{\Gamma(n/2+1)}{\pi^{n/2}} \left(\frac{x_j^2}{|x|^2} -\frac{1}{n}\right),
\end{align*}
where we have used that
\begin{align*}
\frac{(n-2)n}{\gamma(2)} &= n/2 \times(n/2-1) \times \Gamma(n/2-1)/\pi^{n/2} \\
&=\frac{\Gamma(n/2+1)}{\pi^{n/2}}.
\end{align*}
We next observe that for $i \neq j$
\begin{align*}
\int_{S^{n-1}} |\Omega_{ij}(\theta)| \,d\sigma(\theta) &\leq \frac{\Gamma(n/2+1)}{\pi^{n/2}} \frac{1}{2} \int_{S^{n-1}} \theta_i^2+\theta_j^2 \, d\sigma(\theta) \\
&=\frac{\Gamma(n/2+1)}{\pi^{n/2}} \frac{|S^{n-1}|}{n} \\
&=\frac{\Gamma(n/2+1)}{\pi^{n/2}} \frac{2\pi^{n/2}}{n\Gamma(n/2)} \\
&=1,
\end{align*}
while in the case $i=j$
\begin{align*}
\int_{S^{n-1}} |\Omega_{jj}(\theta)|\, d\sigma(\theta) &\leq \frac{\Gamma(n/2+1)}{\pi^{n/2}}  \int_{S^{n-1}} \left(\theta_j^2+1/n\right) \,  d\sigma(\theta) \\
&=2\frac{\Gamma(n/2+1)}{\pi^{n/2}} \frac{|S^{n-1}|}{n} \\
&=2\frac{\Gamma(n/2+1)}{\pi^{n/2}} \frac{2\pi^{n/2}}{n\Gamma(n/2)} \\
&=2.
\end{align*}
Finally, it remains to show
\begin{align*}
			\int_{S^{n-1}}|\Omega_{ij}(\theta-\xi\delta)-\Omega_{ij}(\theta)|\,d\sigma(\theta)\leq C \sqrt{n}\delta\int_{S^{n-1}}|\Omega_{ij}(\theta)|\,d\sigma(\theta)
\end{align*}
for $\xi \in S^{n-1}$ and $0<\delta<\frac{1}{n}$, for which it suffices to prove
\begin{align*}
\left| \nabla \frac{x_ix_j}{|x|^2}\right| &\leq \frac{c}{|x|}\quad\text{and}\\
\left| \nabla \frac{x_i^2}{|x|^2} \right| &\leq \frac{c'}{|x|}
\end{align*}
for some $c,c'>0$ independent of $n$, as Janakiraman's computation \cite[p.~553-554]{J2004} implies the desired result.

We first treat the case $i\neq j$.  To this end, observe that
\begin{align*}
\left| \frac{\partial}{\partial x_i} \frac{x_ix_j}{|x|^2} \right| = \left| \frac{x_j}{|x|^2} - \frac{2 x_i^2x_j}{|x|^4}\right|,
\end{align*}
while
\begin{align*}
\left| \frac{\partial}{\partial x_k} \frac{x_ix_j}{|x|^2} \right| = \left| \frac{2 x_ix_jx_k}{|x|^4}\right|.
\end{align*}
In particular
\begin{align*}
\left| \nabla \frac{x_ix_j}{|x|^2}\right|^2 = \sum_{k\neq i,j} \left| \frac{2 x_ix_jx_k}{|x|^4}\right|^2 +\left| \frac{x_j}{|x|^2} - \frac{2 x_i^2x_j}{|x|^4}\right|^2 + \left| \frac{x_i}{|x|^2} - \frac{2 x_j^2x_i}{|x|^4}\right|^2.
\end{align*}
However, 
\begin{align*}
\sum_{k\neq i,j} \left| \frac{2 x_ix_jx_k}{|x|^4}\right|^2 = \frac{4x_i^2x_j^2}{|x|^6} -\frac{4 x^4_ix_j^2}{|x|^8}-\frac{4 x_i^2x_j^4}{|x|^8}
\end{align*}
and
\begin{align*}
\left| \frac{x_j}{|x|^2} - \frac{2 x_i^2x_j}{|x|^4}\right|^2 &= \frac{x_j^2}{|x|^4} \left(1-\frac{4x_i^2}{|x|^2}+ \frac{4x_i^4}{|x|^4}\right), \\
 \left| \frac{x_i}{|x|^2} - \frac{2 x_j^2x_i}{|x|^4}\right|^2 &=\frac{x_i^2}{|x|^4} \left(1-\frac{4x_j^2}{|x|^2}+ \frac{4x_j^4}{|x|^4}\right), \\
\end{align*}
so that
\begin{align*}
\left| \nabla \frac{x_ix_j}{|x|^2}\right|^2 &= \frac{4x_i^2x_j^2}{|x|^6} -\frac{4 x^4_ix_j^2}{|x|^8}-\frac{4 x_i^2x_j^4}{|x|^8} +\frac{x_j^2}{|x|^4} \left(1-\frac{4x_i^2}{|x|^2}+ \frac{4x_i^4}{|x|^4}\right)+\frac{x_i^2}{|x|^4} \left(1-\frac{4x_j^2}{|x|^2}+ \frac{4x_j^4}{|x|^4}\right) \\
&= \frac{x_j^2}{|x|^4} +\frac{x_i^2}{|x|^4} - \frac{4x_i^2x_j^2}{|x|^6}.
\end{align*}
This shows one can take $c= \sqrt{5}$ (though a more clever observation here could possibly do better).

Finally for the case $i=j$, we have
\begin{align*}
\left| \frac{\partial}{\partial x_i} \frac{x_i^2}{|x|^2} \right| = \left| \frac{2x_i}{|x|^2} - \frac{2x_i^3}{|x|^4}\right|,
\end{align*}
while
\begin{align*}
\left| \frac{\partial}{\partial x_k} \frac{x_i^2}{|x|^2} \right| = \left| \frac{2 x_i^2x_k}{|x|^4}\right|.
\end{align*}
Therefore
\begin{align*}
\left| \nabla  \frac{x_i^2}{|x|^2}  \right|^2 = \sum_{k\neq i} \left| \frac{2 x_i^2x_k}{|x|^4}\right|^2 +\left| \frac{2x_i}{|x|^2} - \frac{2x_i^3}{|x|^4}\right|^2.
\end{align*}
However in a similar way one computes
\begin{align*}
 \sum_{k\neq i} \left| \frac{2 x_i^2x_k}{|x|^4}\right|^2 = \frac{4x_i^4}{|x|^6} -\frac{4 x_i^6}{|x|^8}
\end{align*}
and
\begin{align*}
\left| \frac{2x_i}{|x|^2} - \frac{2x_i^3}{|x|^4}\right|^2 &= \frac{4x_i^2}{|x|^4}\left(1-\frac{x_i^2}{|x|^2}+\frac{x_i^4}{|x|^4}\right).
\end{align*}
Thus
\begin{align*}
\left| \nabla \frac{x_i^2}{|x|^2} \right|^2  &= \frac{4x_i^4}{|x|^6} -\frac{4 x_i^6}{|x|^8} +\frac{4x_i^2}{|x|^4}\left(1-\frac{x_i^2}{|x|^2}+\frac{x_i^4}{|x|^4}\right)\\
&=\frac{4x_i^2}{|x|^4},
\end{align*}
so that $c'=2$ is sufficient.
\end{example}


\section{Lemmas}\label{lemmas}
The following Lemma is a dimensional modification of the usual Whitney decomposition.  We here adapt the argument given in \cite[p.~609]{Grafakos1}.
\begin{lemma}\label{Whitney}
If $U \subseteq \mathbb{R}^n$ is an open set, then we can write $U = \bigcup_{k=1}^{\infty} Q_k$, a disjoint union of dyadic cubes satisfying 
$$
	(2n-1)\text{diam}(Q_k)\leq \text{dist}(Q_k, \mathbb{R}^n\setminus U).
$$
\end{lemma}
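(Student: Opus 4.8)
The plan is to run the classical Whitney construction, in the spirit of the argument adapted from \cite[p.~609]{Grafakos1}, but with the separation threshold enlarged from an absolute constant to $2n-1$ so that the dimensional bookkeeping in the sequel goes through. We may assume $U \neq \mathbb{R}^n$ (if $U = \mathbb{R}^n$ the statement is trivial, e.g.\ by tiling with the unit dyadic cubes and interpreting $\text{dist}(\cdot,\emptyset)=+\infty$), so that $d(x) := \text{dist}(x, \mathbb{R}^n \setminus U)$ is finite and strictly positive for every $x \in U$. Call a dyadic cube $Q$ \emph{admissible} if $(2n-1)\,\text{diam}(Q) \leq \text{dist}(Q, \mathbb{R}^n \setminus U)$. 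First I would record two elementary observations: admissibility is inherited by dyadic subcubes (if $Q' \subseteq Q$ then $\text{dist}(Q', \mathbb{R}^n\setminus U) \geq \text{dist}(Q, \mathbb{R}^n\setminus U)$ while $\text{diam}(Q') \leq \text{diam}(Q)$), and every admissible cube is contained in $U$ (since $(2n-1)\,\text{diam}(Q) > 0$ forces $\text{dist}(Q,\mathbb{R}^n\setminus U)>0$).

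Next I would establish the covering step. Fix $x \in U$ and a dyadic cube $Q \ni x$ of side length $\ell$. For every $y \in Q$ we have $|x-y| \leq \text{diam}(Q) = \sqrt{n}\,\ell$, hence $\text{dist}(y,\mathbb{R}^n\setminus U) \geq d(x) - \sqrt{n}\,\ell$ and therefore $\text{dist}(Q, \mathbb{R}^n\setminus U) \geq d(x) - \sqrt{n}\,\ell$. Thus $Q$ is admissible whenever $(2n-1)\sqrt{n}\,\ell \leq d(x) - \sqrt{n}\,\ell$, i.e.\ whenever $\ell \leq d(x)/(2n^{3/2})$, which holds for all sufficiently small dyadic $\ell$. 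Combining this with the subcube-inheritance observation, the admissible dyadic cubes containing $x$ are precisely those of side length at most some $\ell_x > 0$; since an admissible cube through $x$ has $\text{diam}(Q) \leq d(x)/(2n-1)$, their side lengths are bounded, so there is a largest one, which I denote $Q_x$.

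Finally I would let $\{Q_k\}$ be the list of distinct cubes among $\{Q_x : x \in U\}$, a countable family since there are only countably many dyadic cubes. Each $Q_k$ satisfies the required inequality by construction; since $x \in Q_x$ and each $Q_x \subseteq U$, we obtain $U = \bigcup_k Q_k$. Disjointness is the point where maximality is combined with the nesting of dyadic cubes: if $Q_k \cap Q_\ell \neq \emptyset$ then one contains the other, say $Q_k \subseteq Q_\ell$; choosing $x \in Q_k$, the cube $Q_\ell$ is admissible, contains $x$, and contains $Q_x = Q_k$, so maximality of $Q_x$ forces $Q_\ell = Q_k$. This gives the desired disjoint decomposition.

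I do not expect a genuine obstacle here: this is the textbook Whitney decomposition with the single modification that the threshold is $2n-1$ in place of a dimension-free constant, and the only place the dimension enters is the elementary estimate $\ell \leq d(x)/(2n^{3/2})$ guaranteeing admissibility of small cubes in the covering step — the one thing to be careful about is that the inflated constant does not spoil existence of admissible cubes at every point. (The statement asks only for the lower bound $(2n-1)\,\text{diam}(Q_k) \leq \text{dist}(Q_k, \mathbb{R}^n\setminus U)$; if a matching upper bound were wanted, one would additionally note that the dyadic parent of $Q_k$ is not admissible.)
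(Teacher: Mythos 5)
Your proof is correct and is essentially the paper's argument: both run the standard Whitney construction with maximal dyadic cubes, and the numerical heart is the same estimate that a dyadic cube $Q\ni x$ of side length $\ell$ satisfies $\text{dist}(Q,\mathbb{R}^n\setminus U)\geq d(x)-\sqrt{n}\,\ell$, so that $\ell\leq d(x)/(2n\sqrt{n})$ guarantees the $(2n-1)$-separation. The only cosmetic difference is that you maximize directly over ``admissible'' cubes, while the paper maximizes over cubes of scale $2^{-k}$ meeting the layer $U_k=\{x: 2n\sqrt{n}2^{-k}\leq d(x)<4n\sqrt{n}2^{-k}\}$; both yield the same conclusion.
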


\begin{proof}
Set 
$$
	U_k:= \{x \in U : 2n\sqrt{n}2^{-k}\leq \text{dist}(x, \mathbb{R}^n\setminus U) < 4n\sqrt{n}2^{-k}\}.
$$ 
Denote the dyadic cubes with side length $2^{-k}$ by $\mathcal{D}_k$ and define 
$$
	\mathcal{F}_k':=\{Q \in \mathcal{D}_k : Q \cap U_k \neq \emptyset\},\quad \mathcal{F}':= \bigcup_{k \in \mathbb{Z}} \mathcal{F}_k',\quad \text{and}
$$
$$
	\mathcal{F}:=\{Q \in \mathcal{F}': Q \text{ is maximal with respect to inclusion}\}.
$$ 
Then $\mathcal{F}$ is a countable collection of pairwise disjoint dyadic cubes and $U=\bigcup_{Q \in \mathcal{F}}Q$. Moreover, for $Q \in \mathcal{F}$, pick a point $x \in U_k \cap Q$ for some $k \in \mathbb{Z}$. Then 
\begin{align*}
n\text{diam}(Q)&=n\sqrt{n}2^{-k}\\
&\leq \text{dist}(x,\mathbb{R}^n\setminus U)-n\sqrt{n}2^{-k}\\
&=\text{dist}(x,\mathbb{R}^n\setminus U)-n\sqrt{n}\ell(Q)\\
&=\text{dist}(x,\mathbb{R}^n\setminus U)-(\sqrt{n}\ell(Q)+(n-1)\sqrt{n}\ell(Q))\\
&\leq \text{dist}(Q,\mathbb{R}^n\setminus U)-(n-1)\sqrt{n}\ell(Q)\\
&=\text{dist}(Q,\mathbb{R}^n\setminus U)-(n-1)\text{diam}(Q).
\end{align*}
Hence
$$
	(2n-1)\text{diam}(Q)\leq \text{dist}(Q, \mathbb{R}^n\setminus U).
$$
\end{proof}

\begin{lemma}\label{Hormander}
There exists an absolute constant $C_1>0$ such that for all $n\ge 2$, 
$$
	\int_{|x|>n|y|}|K(x-y)-K(x)|\,dx \leq C_1\|\Omega\|_{L^1(S^{n-1},\sigma)}.
$$
\end{lemma}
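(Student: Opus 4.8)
The plan is to split the kernel difference into a ``spherical'' part and a ``radial'' part and to bound each in polar coordinates, using that the integration starts at radius $n|y|$ to absorb the factor $n$ appearing in hypothesis (3) of Definition \ref{SIO}. We may assume $y\neq 0$. Since $\Omega$ is homogeneous of degree zero, write
\[
K(x-y)-K(x)=\frac{\Omega(x-y)-\Omega(x)}{|x-y|^{n}}+\Omega(x)\left(\frac{1}{|x-y|^{n}}-\frac{1}{|x|^{n}}\right)=:I(x)+II(x).
\]

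Next I would handle the radial part $II$. On $\{|x|>n|y|\}$ with $n\ge 2$ one has $\bigl||x|-|x-y|\bigr|\le|y|\le|x|/n$, hence $|x-y|\in[(1-\tfrac1n)|x|,(1+\tfrac1n)|x|]$. Writing $b=|x|$, $a=|x-y|$ and factoring $b^{n}-a^{n}=(b-a)\sum_{m=0}^{n-1}a^{m}b^{n-1-m}$, the elementary bounds $(1+\tfrac1n)^{n-1}\le e$ and $(1-\tfrac1n)^{n}\ge\tfrac14$ (valid for $n\ge2$) give the pointwise estimate $|II(x)|\le 4e\,n|y|\,|\Omega(x)|\,|x|^{-n-1}$. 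Passing to polar coordinates $x=r\theta$ and using $\int_{n|y|}^{\infty}r^{-2}\,dr=(n|y|)^{-1}$, this yields $\int_{|x|>n|y|}|II(x)|\,dx\le 4e\,\|\Omega\|_{L^{1}(S^{n-1},\sigma)}$.

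For the spherical part $I$ I would again pass to polar coordinates; homogeneity gives $\Omega(r\theta-y)=\Omega(\theta-\tfrac{y}{r})$, and $|r\theta-y|^{n}=r^{n}|\theta-\tfrac{y}{r}|^{n}\ge r^{n}(1-\tfrac1n)^{n}\ge r^{n}/4$ on $r>n|y|$. Therefore
\[
\int_{|x|>n|y|}|I(x)|\,dx\le 4\int_{n|y|}^{\infty}\frac{1}{r}\int_{S^{n-1}}\bigl|\Omega(\theta-\tfrac{y}{r})-\Omega(\theta)\bigr|\,d\sigma(\theta)\,dr.
\]
For fixed $r>n|y|$ put $\xi=y/|y|\in S^{n-1}$ and $\delta=|y|/r\in(0,\tfrac1n)$, so that $y/r=\delta\xi$; hypothesis (3) then bounds the inner integral by $Cn\delta\,\|\Omega\|_{L^{1}(S^{n-1},\sigma)}=Cn\tfrac{|y|}{r}\,\|\Omega\|_{L^{1}(S^{n-1},\sigma)}$, and carrying out the $r$-integral exactly as before gives $\int_{|x|>n|y|}|I(x)|\,dx\le 4C\,\|\Omega\|_{L^{1}(S^{n-1},\sigma)}$. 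Adding the two contributions proves the lemma with $C_{1}=4e+4C$.

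The only delicate point — and the reason the bound is uniform in $n$ — is that every dimension-dependent geometric factor, such as $(1\pm\tfrac1n)^{n}$ and the ensuing lower bound for $|x-y|^{n}$, must be controlled by an absolute constant rather than by something like $2^{n}$; this is precisely what is bought by starting the integration at $n|y|$ (which produces a compensating $1/n$ from $\int_{n|y|}^{\infty}r^{-2}\,dr$) rather than at a fixed multiple of $|y|$, as in the classical H\"ormander condition, and it matches the extra factor of $n$ in hypothesis (3).
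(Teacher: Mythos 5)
Your proof is correct. Note that the paper does not actually prove this lemma itself --- it cites Janakiraman \cite{J2004}, pp.~550--552 --- and your argument is a correct, self-contained version of that standard computation: the split into the angular difference $\frac{\Omega(x-y)-\Omega(x)}{|x-y|^n}$ and the radial difference $\Omega(x)\bigl(|x-y|^{-n}-|x|^{-n}\bigr)$, the uniform-in-$n$ bounds $(1+\tfrac1n)^{n-1}\le e$ and $(1-\tfrac1n)^n\ge\tfrac14$, and the cancellation of the factor $n$ from hypothesis (3) against $\int_{n|y|}^{\infty}r^{-2}\,dr=(n|y|)^{-1}$ are exactly the mechanisms that make the constant absolute. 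All the individual estimates check out, so nothing further is needed.
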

\noindent Lemma \ref{Hormander} is precisely the claim in \cite[page 542]{J2004} and subsequently proved therein on pages 550--552.

\begin{lemma}\label{cancellation}
If $\mu$ is a signed Borel measure supported on $B(x,r)$ and $\mu(B(x,r))=0$ for some $x \in \mathbb{R}^n$ and $r>0$, then 
$$
	\int_{|x-y|>nr} |T\mu(y)|\,dy \leq C_1\|\Omega\|_{L^1(S^{n-1},\sigma)} \|\mu\|_{M_b(\mathbb{R}^n)}.
$$
\end{lemma}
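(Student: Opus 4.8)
The plan is to reduce the estimate directly to Lemma \ref{Hormander} by exploiting the mean-zero hypothesis on $\mu$. Since $\mu$ is supported in $B(x,r)$ and we only integrate over points $y$ with $|x-y|>nr$, every such $y$ lies outside the support of $\mu$; in this region $T\mu(y)$ is simply the absolutely convergent integral $T\mu(y)=\int_{B(x,r)}K(y-z)\,d\mu(z)$, with no principal value needed. Using $\mu(B(x,r))=0$ I would subtract the $z$-independent quantity $K(y-x)$ and write
\[
T\mu(y)=\int_{B(x,r)}\bigl(K(y-z)-K(y-x)\bigr)\,d\mu(z),
\]
so that $|T\mu(y)|\leq \int_{B(x,r)}|K(y-z)-K(y-x)|\,d|\mu|(z)$ for every $y$ with $|x-y|>nr$.

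Next I would integrate this pointwise bound in $y$ over $\{|x-y|>nr\}$ and apply Tonelli's theorem (the integrand is nonnegative) to interchange the order of integration:
\[
\int_{|x-y|>nr}|T\mu(y)|\,dy\leq \int_{B(x,r)}\left(\int_{|x-y|>nr}|K(y-z)-K(y-x)|\,dy\right)d|\mu|(z).
\]
For a fixed $z\in B(x,r)$, the change of variables $w=y-x$, $v=z-x$ transforms the inner integral into $\int_{|w|>nr}|K(w-v)-K(w)|\,dw$. Since $|v|=|z-x|<r$, we have the inclusion $\{|w|>nr\}\subseteq\{|w|>n|v|\}$, and therefore Lemma \ref{Hormander} bounds the inner integral by $C_1\|\Omega\|_{L^1(S^{n-1},\sigma)}$, uniformly in $v$ (hence in $z$).

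Substituting this uniform bound back and using that $\mu$ is supported in $B(x,r)$ gives
\[
\int_{|x-y|>nr}|T\mu(y)|\,dy\leq C_1\|\Omega\|_{L^1(S^{n-1},\sigma)}\int_{B(x,r)}d|\mu|(z)=C_1\|\Omega\|_{L^1(S^{n-1},\sigma)}\|\mu\|_{M_b(\mathbb{R}^n)},
\]
which is the claim. I do not expect a genuine obstacle here: this is the standard Hörmander cancellation trick, and the only points deserving a word of care are that $T\mu(y)$ requires no regularization in the region under consideration (so that subtracting $K(y-x)$ and invoking Tonelli are both legitimate) and that the scales match correctly — excising the ball of radius $nr$ is precisely what guarantees $|z-x|<r\Rightarrow |w|>n|z-x|$, which is the hypothesis needed to apply Lemma \ref{Hormander}.
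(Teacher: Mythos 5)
Your proof is correct and follows essentially the same route as the paper's: exploit $\mu(B(x,r))=0$ to subtract the constant $K(y-x)$, apply Tonelli, and invoke Lemma \ref{Hormander} via the inclusion $\{|w|>nr\}\subseteq\{|w|>n|v|\}$ for $|v|<r$. The paper simply normalizes $x=0$ at the outset instead of performing your explicit translation, and your remarks about the absence of a principal value and the matching of scales are accurate but not points of divergence.
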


\begin{proof}
Without loss of generality, suppose $x=0$. Since $\text{supp}\, \mu \subseteq B(0,r)$ and $\mu(B(0,r))=0$, 
$$
    |T\mu(y)| = \bigg|\int_{|z|<r}K(y-z)\,d\mu(z)\bigg|=\bigg|\int_{|z|<r} (K(y-z)-K(y))\,d\mu(z)\bigg|.
$$ 
Therefore, using Fubini's theorem and Lemma \ref{Hormander}, we see
\begin{align*}
\int_{|y|>nr}|T\mu(y)|\,dy &\leq \int_{|y|>nr}\int_{|z|<r}|K(y-z)-K(y)|\,d|\mu|(z)dy\\
&\leq\int_{|z|<r}\int_{|y|>n|z|}|K(y-z)-K(y)|\,dyd|\mu|(z)\\
&\leq C_1\|\Omega\|_{L^1(S^{n-1},\sigma)}\|\mu\|_{M_b(\mathbb{R}^n)}.
\end{align*}
\end{proof}

To recover Janakiranman's dimensional dependence result for the Riesz transforms, we will also need to consider the maximal truncation operator, $T^{\#}$, given by
\begin{align*}
T^{\#}f(x):=\sup_{\varepsilon>0} \left|\int_{|x-y|>\varepsilon}K(x-y)f(y)\,dy\right|
\end{align*}
for $f \in C_c^{\infty}(\mathbb{R}^n)$. The following lemma can be justified using the method of rotations, see \cite[Remark 5.2.9 on p.~ 341]{Grafakos1} for details.
\begin{lemma}\label{dimensionlessl2}
Let $T$ be a singular integral operator satisfying the conditions of Definition \ref{SIO} and further suppose that $\Omega$ is odd.  There exist absolute constants $C_2,C_3>0$ such that 
\begin{align*}
\|Tf\|_{L^2(\mathbb{R}^n)}\leq C_2\|\Omega\|_{L^1(S^{n-1},\sigma)}\|f\|_{L^2(\mathbb{R}^n)}
\end{align*}
and
\begin{align*}
\|T^{\#}f\|_{L^2(\mathbb{R}^n)}\leq C_3\|\Omega\|_{L^1(S^{n-1},\sigma)}\|f\|_{L^2(\mathbb{R}^n)}
\end{align*}
for all $f \in L^2(\mathbb{R}^n)$.
\end{lemma}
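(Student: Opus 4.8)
The plan is to use the \emph{method of rotations}, which writes $T$ and its truncations as averages over directions of one-dimensional Hilbert transforms, thereby transferring the dimension-free $L^2$ bounds for the Hilbert transform on $\mathbb{R}$ to the operator on $\mathbb{R}^n$, with the only $n$-dependence entering through the factor $\|\Omega\|_{L^1(S^{n-1},\sigma)}$. Throughout one works with $f \in C_c^\infty(\mathbb{R}^n)$ and extends to $L^2(\mathbb{R}^n)$ at the end by density.

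First I would fix $f$ and $x$, pass to polar coordinates $y=r\theta$ with $r>0$, $\theta\in S^{n-1}$ in the defining integral, and symmetrize the $\theta$-integral via the substitution $\theta\mapsto-\theta$ together with the oddness of $\Omega$ (which, incidentally, makes the mean-zero condition (2) of Definition \ref{SIO} automatic). This turns the conditionally convergent half-line integral $p.v.\int_0^\infty f(x-r\theta)\,dr/r$ into the full principal value $p.v.\int_{-\infty}^\infty f(x-r\theta)\,dr/r$, yielding the identity
\[
Tf(x)=\frac{\pi}{2}\int_{S^{n-1}}\Omega(\theta)\,H_\theta f(x)\,d\sigma(\theta),\qquad H_\theta f(x):=\frac{1}{\pi}\,p.v.\!\int_{-\infty}^{\infty}\frac{f(x-t\theta)}{t}\,dt.
\]
Minkowski's integral inequality then gives $\|Tf\|_{L^2}\le\frac{\pi}{2}\int_{S^{n-1}}|\Omega(\theta)|\,\|H_\theta f\|_{L^2}\,d\sigma(\theta)$, so it remains to bound $\|H_\theta f\|_{L^2(\mathbb{R}^n)}$ uniformly in $\theta$ and $n$. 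Decomposing $\mathbb{R}^n=\mathbb{R}\theta\oplus\theta^\perp$, applying Fubini, and using that the one-dimensional Hilbert transform is an $L^2(\mathbb{R})$ isometry (Plancherel), one obtains $\|H_\theta f\|_{L^2(\mathbb{R}^n)}=\|f\|_{L^2(\mathbb{R}^n)}$ for every $\theta$. Hence the first inequality holds with $C_2=\pi/2$.

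For the maximal truncation I would run the same symmetrization at the level of the truncated operators $T_\varepsilon f(x)=\int_{|y|>\varepsilon}K(x-y)f(y)\,dy$, obtaining
\[
T_\varepsilon f(x)=\frac12\int_{S^{n-1}}\Omega(\theta)\Big(\int_{|r|>\varepsilon}\frac{f(x-r\theta)}{r}\,dr\Big)\,d\sigma(\theta),
\]
then pulling absolute values inside, bounding the radial integral by the maximal Hilbert transform $H_\theta^{\#}f(x):=\sup_{\varepsilon>0}\big|\int_{|r|>\varepsilon}f(x-r\theta)\,dr/r\big|$, and taking the supremum over $\varepsilon$ outside the spherical integral — legitimate since the resulting bound holds for all $\varepsilon$ simultaneously — to get the pointwise estimate $T^{\#}f(x)\le\frac12\int_{S^{n-1}}|\Omega(\theta)|\,H_\theta^{\#}f(x)\,d\sigma(\theta)$. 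Minkowski's inequality reduces matters to the uniform bound $\|H_\theta^{\#}f\|_{L^2(\mathbb{R}^n)}\le C\|f\|_{L^2(\mathbb{R}^n)}$, which again follows by Fubini from the classical $L^2(\mathbb{R})$ boundedness of the maximal Hilbert transform with an absolute constant (e.g. via Cotlar's inequality). This gives the second inequality with some absolute $C_3$.

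The steps requiring care are the justifications of the principal-value manipulations — that the symmetrization is legitimate and that the truncated integrals converge to $Tf$ — which for $f\in C_c^\infty$ are routine by dominated convergence once one separates a small ball about the singularity, where oddness produces cancellation and $f$ is Lipschitz, from its complement, where $f$ has compact support. The main point is really bookkeeping: one must verify that neither the rotation identity, nor Minkowski's inequality, nor the passage to one-dimensional operators introduces any factor depending on $n$, so that the full dimensional dependence is concentrated in $\|\Omega\|_{L^1(S^{n-1},\sigma)}$ — which is precisely what is needed for the later application to the dimensional weak-type bound. The details are contained in the cited remark of \cite{Grafakos1}.
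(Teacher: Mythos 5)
Your proposal is correct and follows exactly the route the paper indicates: the paper gives no proof of this lemma beyond citing the method of rotations (Remark 5.2.9 in \cite{Grafakos1}), and your argument is a faithful, correctly executed write-up of that method, with the symmetrization, the directional Hilbert transform identity, Minkowski's inequality, and the dimension-free one-dimensional bounds all handled properly. Nothing to add.
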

\noindent Note that Lemma \ref{dimensionlessl2} applies to the Riesz transforms since $\Omega(x)=\frac{\Gamma\left(\frac{n+1}{2}\right)}{\pi^{\frac{n+1}{2}}}\frac{x_j}{|x|}$ is odd.

\section{Main Results}\label{mainresults}


\begin{thm}\label{pointmassreduction1}
There exist absolute constants $C_4,C_5>0$ such that
\begin{align*}
	|\{|Tf|>\lambda\}|\leq \left(C_4+C_5\|\Omega\|_{L^1(S^{n-1},\sigma)}\right)\frac{1}{\lambda}\|f\|_{L^1(\mathbb{R}^n)}+2\sup_{\nu}|\{|T\nu|>\lambda\}|,
\end{align*}
for any $\lambda>0$ and $f \in L^1(\mathbb{R}^n)$, where the above supremum is taken over measures of the form $\nu=\sum_{k=1}^Na_k\delta_{c_k}$ for $N \in \mathbb{N}$, $c_k \in \mathbb{R}^n$, and $a_k \in \mathbb{R}^+$ with $\sum_{k=1}^N a_k \leq 16\|f\|_{L^1(\mathbb{R}^n)}$.
\end{thm}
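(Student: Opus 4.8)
The plan is to run a Calder\'on--Zygmund decomposition of $f$ at a threshold proportional to $\lambda/(1+\|\Omega\|_{L^1(S^{n-1},\sigma)})$, but organized around the \emph{dimensional} Whitney decomposition of Lemma~\ref{Whitney}: the ``flat'' part of the good function is then compared with a finite sum of Dirac masses by means of Lemma~\ref{cancellation}, while the remaining bounded piece is estimated in $L^2$. Fix $\lambda>0$ and $f\in L^1(\mathbb{R}^n)$; we may take $f$ real-valued (treat real and imaginary parts separately), and set $\mu:=c\lambda/(1+\|\Omega\|_{L^1(S^{n-1},\sigma)})$ for a small absolute constant $c$ to be chosen. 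Let $U$ be the union of the maximal dyadic cubes on which the average of $|f|$ exceeds $\mu$; then $|U|\le\mu^{-1}\|f\|_{L^1(\mathbb{R}^n)}$ and $|f|\le\mu$ a.e.\ off $U$. Passing to the interior of $U$ (which has the same measure) and applying Lemma~\ref{Whitney}, write $U=\bigcup_k Q_k$ with $(2n-1)\text{diam}(Q_k)\le\text{dist}(Q_k,\mathbb{R}^n\setminus U)$, let $c_k$ be the center of $Q_k$ and $r_k:=\tfrac12\text{diam}(Q_k)$. The point that prevents an exponential loss is that $nr_k\le(2n-1)\text{diam}(Q_k)\le\text{dist}(c_k,\mathbb{R}^n\setminus U)$, so $B(c_k,nr_k)\subseteq U$ for every $k$; hence, although each such ball has volume of order $n^{3n/2}|Q_k|$, the union $\bigcup_k B(c_k,nr_k)$ still lies inside $U$ and so has measure at most $\mu^{-1}\|f\|_{L^1(\mathbb{R}^n)}$.

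Next I would decompose $f=h+\sum_k\langle f\rangle_{Q_k}\mathbf 1_{Q_k}+\sum_k b_k$, where $h:=f\mathbf 1_{\mathbb{R}^n\setminus U}$ and $b_k:=(f-\langle f\rangle_{Q_k})\mathbf 1_{Q_k}$, so that $\|h\|_{L^\infty}\le\mu$, $\|h\|_{L^1}\le\|f\|_{L^1}$, each $b_k$ has mean zero, and $\sum_k\|b_k\|_{L^1}\le 2\|f\|_{L^1}$. Put $\nu:=\sum_k(\int_{Q_k}f)\,\delta_{c_k}$, a finite combination of Dirac masses with $\|\nu\|_{M_b(\mathbb{R}^n)}\le\sum_k\int_{Q_k}|f|\le\|f\|_{L^1(\mathbb{R}^n)}$. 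Then $Tf-Th-T\nu=\sum_k T\mu_k$, where $\mu_k:=\langle f\rangle_{Q_k}(\mathbf 1_{Q_k}\,dx-|Q_k|\delta_{c_k})+b_k\,dx$ is a signed measure supported in the ball of radius $r_k$ about $c_k$, with $\mu_k(\mathbb{R}^n)=0$ and $\|\mu_k\|_{M_b(\mathbb{R}^n)}\le 4\int_{Q_k}|f|$. By Lemma~\ref{cancellation}, $\int_{\{|x-c_k|>nr_k\}}|T\mu_k|\le 4C_1\|\Omega\|_{L^1(S^{n-1},\sigma)}\int_{Q_k}|f|$, and since $B(c_k,nr_k)\subseteq U$, i.e.\ $\mathbb{R}^n\setminus U\subseteq\{|x-c_k|>nr_k\}$, summing in $k$ gives
\[
\int_{\mathbb{R}^n\setminus U}|Tf-Th-T\nu|\,dx\le 4C_1\|\Omega\|_{L^1(S^{n-1},\sigma)}\|f\|_{L^1(\mathbb{R}^n)}.
\]
This is the step that creates the $\|\Omega\|_{L^1(S^{n-1},\sigma)}$-linear, dimension-free error.

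To conclude, I would use the inclusion
\[
\{|Tf|>\lambda\}\subseteq U\cup\{|Th|>\lambda/3\}\cup\{|T\nu|>\lambda/3\}\cup\{x\notin U:\ |Tf-Th-T\nu|(x)>\lambda/3\}.
\]
The first set has measure $\le\mu^{-1}\|f\|_{L^1}$, which is $\lesssim(1+\|\Omega\|_{L^1(S^{n-1},\sigma)})\|f\|_{L^1}/\lambda$; the fourth has measure at most $(3/\lambda)$ times the displayed $L^1$ bound, i.e.\ $\lesssim\|\Omega\|_{L^1(S^{n-1},\sigma)}\|f\|_{L^1}/\lambda$. For $\{|Th|>\lambda/3\}$ I would combine the dimension-free $L^2$ bound $\|Th\|_{L^2}\le C\|\Omega\|_{L^1(S^{n-1},\sigma)}\|h\|_{L^2}$ available for the operators at hand (Lemma~\ref{dimensionlessl2} when $\Omega$ is odd, and the analogous bound for the second order Riesz transforms) with $\|h\|_{L^2}^2\le\|h\|_{L^\infty}\|h\|_{L^1}\le\mu\|f\|_{L^1}$, so that by Chebyshev $|\{|Th|>\lambda/3\}|\lesssim\|\Omega\|_{L^1(S^{n-1},\sigma)}^2\,\mu\,\|f\|_{L^1}/\lambda^2\lesssim\|\Omega\|_{L^1(S^{n-1},\sigma)}\|f\|_{L^1}/\lambda$; here the choice $\mu\asymp\lambda/(1+\|\Omega\|_{L^1(S^{n-1},\sigma)})$ is exactly what turns the a priori quadratic dependence on $\|\Omega\|_{L^1(S^{n-1},\sigma)}$ into a linear one, and the fact that $h$ carries no cube-average data keeps this step free of the factor $2^n$ that a direct $L^2$ treatment of $\sum_k\langle f\rangle_{Q_k}\mathbf 1_{Q_k}$ would cost. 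Finally, writing $\nu=\nu^+-\nu^-$ with $\nu^\pm$ nonnegative finite sums of Dirac masses of total mass $\le\|f\|_{L^1(\mathbb{R}^n)}$, one has $\{|T\nu|>\lambda/3\}\subseteq\{|T(6\nu^+)|>\lambda\}\cup\{|T(6\nu^-)|>\lambda\}$; tracking all numerical constants (including, if one prefers, an initial splitting of $f$ into positive and negative parts) produces the coefficient $2$ in front of $\sup_\nu|\{|T\nu|>\lambda\}|$ and the mass bound $16\|f\|_{L^1(\mathbb{R}^n)}$. Summing the four estimates and fixing $c$ an absolute constant proves the theorem.

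The hard part is the accounting that keeps \emph{every} error term linear in $\|\Omega\|_{L^1(S^{n-1},\sigma)}$ and entirely free of dimensional constants. Three ingredients are indispensable: (i) the flat piece $\sum_k\langle f\rangle_{Q_k}\mathbf 1_{Q_k}$ must be routed through the Dirac comparison of Lemma~\ref{cancellation} rather than through $L^2$, to avoid the factor $2^n$ inherited from dyadic parent cubes; (ii) the Calder\'on--Zygmund threshold must be taken proportional to $\lambda/(1+\|\Omega\|_{L^1(S^{n-1},\sigma)})$, which linearizes the $L^2$ estimate for $Th$ and simultaneously keeps $|U|$ linear; and (iii), most importantly, one must use the dimensional Whitney decomposition of Lemma~\ref{Whitney}, whose $(2n-1)$-separation is calibrated precisely so that the $n$-fold dilates $B(c_k,nr_k)$ demanded by Lemma~\ref{cancellation} are swallowed by $U$ --- a naive Nazarov--Treil--Volberg decomposition would instead produce an exceptional set larger by a factor of order $n^{3n/2}$, which is the source of the exponential dependence in the classical argument. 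Minor points --- working with the interior of $U$ in order to apply Lemma~\ref{Whitney}, and checking $U\supseteq\{|f|>\mu\}$ so that $\|h\|_{L^\infty}\le\mu$ --- are routine.
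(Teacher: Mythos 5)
Your argument is correct in substance and reaches the stated constants, but it runs along a genuinely different decomposition than the paper's. The paper follows Nazarov--Treil--Volberg: it reduces to continuous, compactly supported, nonnegative $f$, takes $U=\{f>\lambda\}$ to be a \emph{level set} of $f$ itself, sets $g=f\chi_{\mathbb{R}^n\setminus U}$ and $b_k=f\chi_{Q_k}$ on the dimensional Whitney cubes with \emph{no} mean-zero correction, and compares each $b_k\,dm$ directly with $a_k\delta_{c_k}$ via Lemma~\ref{cancellation}. You instead run the classical Calder\'on--Zygmund stopping time at a tuned threshold $\mu\asymp\lambda/(1+\|\Omega\|_{L^1(S^{n-1},\sigma)})$, re-tile $U$ by Lemma~\ref{Whitney}, and split the bad part into a mean-zero piece plus a ``flat'' piece routed through the Dirac comparison. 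The essential mechanism is identical in both proofs --- the $(2n-1)$-separation of Lemma~\ref{Whitney} guarantees $B(c_k,nr_k)\subseteq U$ so that Lemma~\ref{cancellation} applies off $U$, and the good part is handled by the dimension-free $L^2$ bound --- and your observation that this is precisely what kills the $n^{3n/2}$ volume loss is the right diagnosis. Two points of comparison: the level-set choice $U=\{f>\lambda\}$ makes $\|g\|_\infty\le\lambda$ immediate and avoids any dyadic machinery, which is why the paper prefers it; on the other hand, your threshold tuning genuinely linearizes the $\|\Omega\|_{L^1(S^{n-1},\sigma)}$-dependence coming from the $L^2$ step (a bound of the form $C_4+C_5\|\Omega\|_{L^1(S^{n-1},\sigma)}$ rather than one with a $\|\Omega\|_{L^1(S^{n-1},\sigma)}^2$ term), which is a small but real improvement in the general statement, though irrelevant for the Riesz transforms where $\|\Omega\|_{L^1(S^{n-1},\sigma)}=2/\pi$.

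One step you should not skip: the supremum in the theorem is over \emph{finite} sums of Dirac masses, while your $\nu=\sum_k(\int_{Q_k}f)\,\delta_{c_k}$ generally has infinitely many terms. The paper handles this by passing to partial sums $b^{(N)}$ and showing $|\{|T(b-b^{(N)})|>\lambda/4\}|\to0$ via the $L^2$ bound and dominated convergence (after first reducing to bounded, compactly supported $f$ by density). An analogous truncation is needed in your argument before you can invoke the supremum; it is routine, but it is part of the proof.
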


\begin{proof}
Let $\lambda>0$ and $f\in L^1(\mathbb{R}^n)$ be given. By density, we may assume $f$ is a continuous function with compact support. First suppose that $f$ is nonnegative. Set 
$$
	U:=\left\{f>\lambda\right\}
$$ 
and apply Lemma \ref{Whitney} to write 
$$
	U=\bigcup_{k=1}^{\infty}Q_k,
$$ 
a disjoint union of dyadic cubes where 
$$
	(2n-1)\text{diam}(Q_k) \leq \text{dist}(Q_k,\mathbb{R}^n\setminus U).
$$ 
Put 
$$
	g:=f\chi_{\mathbb{R}^n\setminus U}, \quad\quad b:=f\chi_{U}, \quad\quad \text{and} \quad\quad b_k:=f\chi_{Q_k}.
$$ 
Then 
$$
	f=g+b=g+\sum_{k=1}^{\infty}b_k,
$$ 
where
\begin{enumerate}
\addtolength{\itemsep}{0.2cm}
\item[(1)] $\, \|g\|_{L^{\infty}(\mathbb{R}^n)}\leq \lambda$ and  $\|g\|_{L^1(\mathbb{R}^n)}\leq \|f\|_{L^1(\mathbb{R}^n)}$,
\item[(2)] \, the $b_k$ are supported on pairwise disjoint cubes $Q_k$ satisfying 
$$
	\sum_{k=1}^{\infty}|Q_k|\leq \frac{1}{\lambda}\|f\|_{L^1(\mathbb{R}^n)}, 
$$
and
\item[(3)] $\|b\|_{L^1(\mathbb{R}^n)}\leq \|f\|_{L^1(\mathbb{R}^n)}$.
\end{enumerate}
We begin the estimate with a standard quasi-subadditivity inequality we will repeat often in what follows.  In particular, the inclusion
\begin{align*}
\{|Tf|>\lambda\} \subseteq \left\{|Tg|>\frac{\lambda}{2}\right\} \cup \left\{|Tb|>\frac{\lambda}{2}\right\}
\end{align*} 
implies
\begin{align*}
	|\{|Tf|>\lambda\}|\leq \left|\left\{|Tg|>\frac{\lambda}{2}\right\}\right|+\left|\left\{|Tb|>\frac{\lambda}{2}\right\}\right|.
\end{align*}

To control the first term, we have by Chebyshev's inequality, Lemma \ref{dimensionlessl2}, and property (1) the estimate
\begin{align*}
\bigg|\bigg\{|Tg|>\frac{\lambda}{2}\bigg\}\bigg| &\leq \frac{4}{\lambda^2}\|Tg\|_{L^2(\mathbb{R}^n)}^2\\
&\leq \frac{4C_2^2}{\lambda^2}\|g\|_{L^2(\mathbb{R}^n)}^2\\
&\leq \frac{4C_2^2}{\lambda}\|g\|_{L^1(\mathbb{R}^n)}\\
&\leq \frac{4C_2^2}{\lambda}\|f\|_{L^1(\mathbb{R}^n)}.
\end{align*}

We now control the second term. For positive integers $N$, let $b^{(N)}$ denote the partial sum $\sum_{k=1}^N b_k$. We claim it suffices to obtain an estimate for $\left|\left\{|Tb^{(N)}|>\frac{\lambda}{4}\right\}\right|$ that is independent of $N$.  Indeed,
\begin{align*}
\left|\left\{|Tb|>\frac{\lambda}{2}\right\}\right| \leq \bigg|\bigg\{|T(b-b^{(N)})|>\frac{\lambda}{4}\bigg\}\bigg|+\bigg|\bigg\{|Tb^{(N)}|>\frac{\lambda}{4}\bigg\}\bigg|,
\end{align*}
while Chebyshev's inequality and the strong-type $(2,2)$ bound for $T$ imply
\begin{align*}
 \bigg|\bigg\{|T(b-b^{(N)})|>\frac{\lambda}{4}\bigg\}\bigg| \leq \frac{16}{\lambda^2} C_2 \|\Omega\|_{L^1(S^{n-1},\sigma)} \int_{\mathbb{R}^n} |b(x)-b^{(N)}(x)|^2\,dx.
\end{align*}
By the assumptions on $f$, both $b^{(N)}$ and $b$ are bounded with compact support, and so the pointwise convergence $b^{(N)}\to b$ and Lebesgue's dominated convergence theorem imply that this term tends to zero as $N \to \infty$.  This completes the proof of the claim.

Let $c_k$ denote the center of $Q_k$, let $a_k := \int_{\mathbb{R}^n}b_k(x)\,dx$, and let $\nu_N := \sum_{k=1}^Na_k\delta_{c_k}$. Then
\begin{align*}
	\bigg|\bigg\{|Tb^{(N)}|>\frac{\lambda}{4}\bigg\}\bigg|&\leq \bigg|\bigg\{|T(b^{(N)}dm-\nu_N)|>\frac{\lambda}{8}\bigg\}\bigg|+\bigg|\bigg\{|T\nu_N|>\frac{\lambda}{8}\bigg\}\bigg| \\
	&\leq |U|+\bigg|\bigg\{x\in \mathbb{R}^n\setminus U:|T(b^{(N)}dm-\nu_N)(x)|>\frac{\lambda}{8}\bigg\}\bigg|+\bigg|\bigg\{|T\nu_N|>\frac{\lambda}{8}\bigg\}\bigg|,
\end{align*}
where $dm$ represents the Lebesgue measure. Using property (2), we have
\begin{align*}
	|U|=\sum_{k=1}^{\infty}|Q_k|\leq \frac{1}{\lambda}\|f\|_{L^1(\mathbb{R}^n)}.
\end{align*}

To estimate the second term, we apply Chebyshev's inequality, Lemma \ref{cancellation}, and property (3) to obtain
\begin{align*}
\bigg|\bigg\{x \in \mathbb{R}^n\setminus U: |T(b^{(N)}dm-\nu_N)(x)|>\frac{\lambda}{8}\bigg\}\bigg| &\leq \frac{8}{\lambda}\int_{\mathbb{R}^n\setminus U}|T(b^{(N)}dm-\nu_N)(x)|\,dx\\
&\leq\frac{8}{\lambda}\sum_{k=1}^N\int_{\mathbb{R}^n\setminus U} |T(b_kdm-a_k\delta_{c_k})(x)|\,dx\\
&\leq 8C_1\|\Omega \|_{L^1(S^{n-1},\sigma)}\frac{1}{\lambda}\sum_{k=1}^N\|b_kdm-a_k\delta_{c_k}\|_{M_b(\mathbb{R}^n)}\\
&\leq 16C_1\|\Omega \|_{L^1(S^{n-1},\sigma)}\frac{1}{\lambda}\|b\|_{L^1(\mathbb{R}^n)}\\
&\leq 16C_1\|\Omega \|_{L^1(S^{n-1},\sigma)}\frac{1}{\lambda}\|f\|_{L^1(\mathbb{R}^n)}.
\end{align*}

Collecting the previous estimates, we conclude
\begin{align*}
	|\{|Tf|>\lambda\}| &\leq \left(4C_2^2+1+16C_1\|\Omega \|_{L^1(S^{n-1},\sigma)}\right)\frac{1}{\lambda}\|f\|_{L^1(\mathbb{R}^n)} + \bigg|\bigg\{|T\nu_N|>\frac{\lambda}{8}\bigg\}\bigg| \\
	&\leq \left(C_4'+C_5'\|\Omega\|_{L^1(S^{n-1},\sigma)}\right)\frac{1}{\lambda}\|f\|_{L^1(\mathbb{R}^n)}+\sup_{\nu}|\{|T\nu|>\lambda/8\}|,
\end{align*}
where $C_4' = 4C_2^2+1$ and $C_5' = 16C_1$. The argument is thus complete in the case of nonnegative $f$. 

In the case where $f$ is signed, and to obtain the constants we claim in the statement of the theorem, we write $f=f^+-f^-$, and estimate
\begin{align*}
|\{|Tf|>\lambda\}| \leq |\{|Tf^+|>\lambda/2\}|+|\{|Tf^-|>\lambda/2\}|.
\end{align*}
The preceding argument can then be applied to the two terms separately, and allows us to conclude the theorem with $C_4=2C_4'$, $C_5=2C_5'$, and noting that this is where we obtain the constant $2$ in the term
\begin{align*}
2 \sup_{\nu}|\{|T\nu|>\lambda\}|
\end{align*}
and why the supremum is over $\nu$ such that $\sum_{k=1}^N a_k \leq 16\|f\|_{L^1(\mathbb{R}^n)}$.
\end{proof}

We now prove Theorem \ref{pointmassreduction}.

\begin{proof}[Proof of Theorem \ref{pointmassreduction}]
From Theorem \ref{pointmassreduction1} we have that
\begin{align*}
|\{|R_jf|>\lambda\}|\leq \left(C_4+C_5\|\Omega\|_{L^1(S^{n-1},\sigma)}\right)\frac{1}{\lambda}\|f\|_{L^1(\mathbb{R}^n)}+2\sup_{\nu}|\{|R_j\nu|>\lambda\}|,
\end{align*}
while the computation of Janakiraman \cite{J2004} referenced above in Example \ref{Riesz1} shows
\begin{align*}
\int_{S^{n-1}}|\Omega(\theta)|\, d\sigma(\theta) = \frac{2}{\pi}.
\end{align*}
Therefore the theorem holds with
\begin{align*}
C= C_4+C_5\frac{2}{\pi} + 2.
\end{align*}

\end{proof}

We next prove an auxiliary result that will be of use in our proof of Theorem \ref{pointmassweaktype}.

\begin{thm}\label{pointmassweaktype1}
If $\Omega$ is an odd function, then there exist absolute constants $C_6,C_7,C_8>0$ such that 
\begin{align*}
	\sup_{\lambda>0}\lambda|\{|T\nu|>\lambda\}| \leq \left(C_6+\left(C_7  \max\left\{1,\|\Omega \|_{L^1(S^{n-1},\sigma)}\right\}+ C_8\log n\right)\|\Omega\|_{L^1(S^{n-1},\sigma)}\right) \|\nu\|_{M_b(\mathbb{R}^n)}
\end{align*}
for all measures $\nu$ of the form $\nu = \sum_{k=1}^N a_k\delta_{c_k}$ with $a_k \in \mathbb{R}^+$.
\end{thm}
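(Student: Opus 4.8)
\emph{Proof proposal.} The plan is to run a Calder\'on--Zygmund decomposition of the \emph{measure} $\nu$ at height $\lambda$, in the spirit of Nazarov--Treil--Volberg \cite{NTV1998} and of \cites{GS2019,S2020,S2019,SW2019}, but with two modifications that remove the exponential-in-$n$ losses a naive execution would produce. Fix $\lambda>0$ and $\nu=\sum_{k=1}^{N}a_k\delta_{c_k}$ with $a_k>0$, set $M:=\|\nu\|_{M_b(\mathbb{R}^n)}=\sum_k a_k$, and put $\Lambda:=\max\{1,\|\Omega\|_{L^1(S^{n-1},\sigma)}\}^2$. I would cover the atoms by pairwise disjoint balls $\{B_j\}=\{B(c_j,r_j)\}$, each atom lying strictly inside exactly one $B_j$, such that
\[
\sum_j|B_j|\ \le\ C\,\Lambda\,\frac{M}{\lambda}
\qquad\text{and}\qquad
\frac{\nu(B_j)}{|B_j|}\ \le\ \frac{\lambda}{\Lambda}\quad\text{for every }j,
\]
with $C$ absolute; this is a Calder\'on--Zygmund decomposition of $\nu$ at level $\lambda/\Lambda$, carried out with balls at the critical scale together with consolidation of overlaps, the geometry being organised by the dimensional Whitney decomposition of Lemma \ref{Whitney}. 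Then set
\[
g:=\sum_j\frac{\nu(B_j)}{|B_j|}\chi_{B_j},\qquad b_j:=\nu|_{B_j}-\frac{\nu(B_j)}{|B_j|}\chi_{B_j},\qquad b:=\nu-g=\sum_j b_j ,
\]
so that $\|g\|_{L^\infty(\mathbb{R}^n)}\le\lambda/\Lambda$, $\|g\|_{L^1(\mathbb{R}^n)}=M$, each $b_j$ has mean zero and is supported inside $B(c_j,r_j)$, and $|\{|T\nu|>\lambda\}|\le|\{|Tg|>\tfrac\lambda2\}|+|\{|Tb|>\tfrac\lambda2\}|$.

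For the good part, $\Omega$ is odd, so Lemma \ref{dimensionlessl2} applies; combined with $\|g\|_{L^2}^2\le\|g\|_{L^\infty}\|g\|_{L^1}\le\lambda M/\Lambda$ and Chebyshev's inequality it yields
\[
\Bigl|\bigl\{|Tg|>\tfrac\lambda2\bigr\}\Bigr|\ \le\ \frac{4}{\lambda^2}\|Tg\|_{L^2}^2\ \le\ \frac{4C_2^2\|\Omega\|_{L^1(S^{n-1},\sigma)}^2}{\lambda^2}\cdot\frac{\lambda M}{\Lambda}\ \le\ \frac{4C_2^2\,\|\Omega\|_{L^1(S^{n-1},\sigma)}}{\lambda}\,M
\]
by the choice of $\Lambda$. (The maximal truncation $T^{\#}$ and its bound in Lemma \ref{dimensionlessl2} are used here and below to make the passage to truncated operators legitimate.)

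For the bad part I dilate each $B_j$ by the factor $1+\tfrac1n$ rather than by a fixed constant: then $\{|Tb|>\tfrac\lambda2\}\subseteq\bigcup_j(1+\tfrac1n)B_j\cup\bigl(\{|Tb|>\tfrac\lambda2\}\setminus\bigcup_j(1+\tfrac1n)B_j\bigr)$, and the first set has measure at most $(1+\tfrac1n)^n\sum_j|B_j|\le e\,C\,\Lambda\,M/\lambda$ --- this is the point of the exponent-$n$ harmless dilation. For the remainder, Chebyshev reduces matters to estimating $\sum_j\int_{((1+1/n)B_j)^c}|Tb_j|$, and for each $j$ I split $((1+\tfrac1n)B_j)^c$ into $(nB_j)^c$ and the collar $nB_j\setminus(1+\tfrac1n)B_j$. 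On $(nB_j)^c$, Lemma \ref{cancellation} gives $\int_{(nB_j)^c}|Tb_j|\le C_1\|\Omega\|_{L^1(S^{n-1},\sigma)}\|b_j\|_{M_b(\mathbb{R}^n)}\le 2C_1\|\Omega\|_{L^1(S^{n-1},\sigma)}\nu(B_j)$. On the collar there is no usable cancellation, so I estimate by size: for $y$ with $(1+\tfrac1n)r_j\le|y-c_j|\le nr_j$ and any $z$ in $\overline{B_j}$ one has $\tfrac{r_j}{n}\le|y-z|\le(n+1)r_j$, whence, using $|K(w)|=|\Omega(w/|w|)|/|w|^n$ and polar coordinates,
\[
\int_{nB_j\setminus(1+\frac1n)B_j}|K(y-z)|\,dy\ \le\ \int_{r_j/n<|w|<(n+1)r_j}|K(w)|\,dw\ =\ \|\Omega\|_{L^1(S^{n-1},\sigma)}\,\log\!\bigl(n(n+1)\bigr);
\]
applying this atom by atom to $\nu|_{B_j}$ and, via Fubini, to $\tfrac{\nu(B_j)}{|B_j|}\chi_{B_j}$ yields $\int_{nB_j\setminus(1+\frac1n)B_j}|Tb_j|\le 2\log\!\bigl(n(n+1)\bigr)\|\Omega\|_{L^1(S^{n-1},\sigma)}\nu(B_j)$. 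Summing over $j$ and using $\sum_j\nu(B_j)=M$ gives a contribution of size $\log n\,\|\Omega\|_{L^1(S^{n-1},\sigma)}M/\lambda$, and adding the three estimates produces the stated bound, with the $C_6$ and $C_7\max\{1,\|\Omega\|_{L^1(S^{n-1},\sigma)}\}\|\Omega\|_{L^1(S^{n-1},\sigma)}$ pieces coming from the term $e\,C\,\Lambda\,M/\lambda$ and the good-part estimate, and $C_8\log n\,\|\Omega\|_{L^1(S^{n-1},\sigma)}$ from the collar.

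The main obstacle I anticipate is the construction in Step~1: producing a decomposition of $\nu$ in which the averaged density $\nu(B_j)/|B_j|$ is controlled by an \emph{absolute} multiple of the stopping level --- rather than the $2^n$ that a plain dyadic Calder\'on--Zygmund stopping time would give --- while still keeping $\sum_j|B_j|$ comparable to $M/\lambda$; this is exactly where a careless argument loses exponential factors, and it is presumably where the dimensional Whitney decomposition of Lemma \ref{Whitney} is exploited. Once this decomposition is available, the only two further ingredients are the ``slim'' dilation $1+\tfrac1n$ (to keep $|\bigcup_j(1+\tfrac1n)B_j|$ of size $M/\lambda$) and the elementary identity $\int_{\rho<|w|<R}|K(w)|\,dw=\|\Omega\|_{L^1(S^{n-1},\sigma)}\log(R/\rho)$ applied across the no-cancellation annulus from scale $r_j/n$ to scale $nr_j$, which is the sole source of the logarithm in $n$.
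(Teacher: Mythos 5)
There is a genuine gap, and it sits exactly where you flagged it: Step 1. You never construct the pairwise disjoint balls $B_j$ with $\nu(B_j)/|B_j|\le\lambda/\Lambda$ \emph{and} $\sum_j|B_j|\le C\Lambda M/\lambda$ for an absolute $C$, and this is not a technicality one can defer --- it is the entire difficulty. Lemma \ref{Whitney} does not supply it (that lemma decomposes an open set into cubes with a good distance-to-complement ratio; it says nothing about covering atoms by disjoint balls of controlled density), and in fact the requirement appears unattainable with dimension-free constants: any consolidation of overlapping balls pays the doubling constant $2^n$ of $\mathbb{R}^n$. Concretely, take $m$ atoms of equal mass $a$ placed along a line so that the ``natural'' balls of volume $\Lambda a/\lambda$ around them overlap in a chain. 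Keeping them disjoint forces $|B_j|<\Lambda a/\lambda$ and violates the density condition, while merging the chain into one ball forces a radius comparable to $m$ times the individual radius, hence $\sum_j|B_j|\gtrsim (m/2)^n\cdot\Lambda a/\lambda$ against a target of $m\Lambda a/\lambda$. So your good part $g$ cannot be produced within the stated framework, and without it neither the $L^2$ estimate for $g$ nor the mean-zero structure of the $b_j$ is available.

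The paper's proof avoids ever needing such a decomposition. Each atom $a_k\delta_{c_k}$ is flattened onto a set $E_k=B(c_k,r_k)\setminus\bigcup_{i<k}E_i$ chosen so that $|E_k|=a_k/\lambda$ exactly; disjointness is enforced by subtraction, no density condition on balls is required, and $E=\bigcup_kE_k$ has measure exactly $M/\lambda$ and is simply discarded. The role of your $L^\infty$-bounded good part is played by the truncated function $h=\sum_k\chi_{\mathbb{R}^n\setminus B(c_k,nr_k)}T\chi_{E_k}$, and --- this is the key idea you are missing --- the level set $\{|h|>1/2\}$ is estimated not by an $L^\infty$--$L^2$ argument but by the Nazarov--Treil--Volberg duality trick: test against a compact $F\subseteq\{h>1/4\}$, pass to the adjoint, absorb the mismatch between $B(c_k,nr_k)$ and $B(x,(n-1)r_k)$ into a $\log 3$ collar term, and control the rest by $\int_E(T^*)^{\#}\chi_F$ via Cauchy--Schwarz and the dimension-free $L^2$ bound for the maximal truncation in Lemma \ref{dimensionlessl2}; solving the resulting inequality in $|F|$ gives the $\max\{\|\Omega\|_{L^1},\|\Omega\|_{L^1}^2\}$ term. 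The parts of your proposal that do work --- the cancellation lemma outside $nB_k$ and the polar-coordinate collar estimate producing the single factor of $\log n$ --- closely mirror the paper's term $\mathrm{II}$, but without a replacement for Step 1 the argument does not close.
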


\begin{proof}
Set
$$
	E_1:=B(c_1,r_1),
$$
where $r_1>0$ is chosen so that $|E_1|=\frac{a_1}{\lambda}$. In general, for $k \in\{2,\ldots,N\}$, set
$$
	 E_k:=B(c_k,r_k)\setminus \bigcup_{i=1}^{k-1}E_i,
$$ 
where $r_k>0$ is chosen so that $|E_k|=\frac{a_k}{\lambda}$. Define 
$$
	h := \sum_{k=1}^N \chi_{\mathbb{R}^n \setminus B(c_k,nr_k)}T\chi_{E_k}.
$$ 
and set 
$$
	E := \bigcup_{k=1}^NE_k.
$$ 

We have 
\begin{align*}
	|\{|T\nu|>\lambda\}|&\leq \bigg|\bigg\{|T\nu-\lambda h|>\frac{\lambda}{2}\bigg\}\bigg|+\bigg|\bigg\{|h|>\frac{1}{2}\bigg\}\bigg| \\
	&\leq |E|+ \bigg|\bigg\{x \in \mathbb{R}^n \setminus E : |T\nu(x)-\lambda h(x)|>\frac{\lambda}{2}\bigg\}\bigg| + \bigg|\bigg\{|h|>\frac{1}{2}\bigg\}\bigg| \\
&=:\text{I}+\text{II}+\text{III}.
\end{align*}

Since the $E_k$ are pairwise disjoint we have
$$
	\text{I}=\sum_{k=1}^N |E_k|=\frac{1}{\lambda}\sum_{k=1}^Na_k=\frac{1}{\lambda}\|\nu\|_{M_b(\mathbb{R}^n)}.
$$

To control $\text{II}$, first notice 
$$
	T\nu-\lambda h = \sum_{k=1}^N \chi_{\mathbb{R}^n \setminus{B(c_k,nr_k)}}T(a_k\delta_{c_k}-\lambda\chi_{E_k}dm)+\sum_{k=1}^Na_k\chi_{{B(c_k,nr_k)}}T\delta_{c_k}.
$$
By Chebyshev's inequality one has
\begin{align*}
	\text{II} &\leq \frac{2}{\lambda}\int_{\mathbb{R}^n\setminus E}|T\nu(x)-\lambda h(x)|\,dx \\
	&\leq\frac{2}{\lambda}\sum_{k=1}^N\int_{\mathbb{R}^n\setminus {B(c_k,nr_k)}}|T(a_k\delta_{c_k}-\lambda \chi_{E_k}dm)(x)|\,dx + \frac{2}{\lambda}\sum_{k=1}^Na_k\int_{{B(c_j,nr_k)}\setminus B(c_k,r_k)}|T\delta_{c_k}(x)|\,dx.
\end{align*}
Meanwhile, Lemma \ref{cancellation} implies
\begin{align*}
	\frac{2}{\lambda}\sum_{k=1}^N\int_{\mathbb{R}^n\setminus {B(c_k,nr_k)}}|T(a_k\delta_{c_k}-\lambda\chi_{E_k}dm)(x)|\,dx \leq \frac{2C_1}{\lambda}\sum_{k=1}^{N}\|a_k\delta_{c_k}-\lambda\chi_{E_k}dm\| \leq \frac{4C_1}{\lambda}\|\nu\|_{M_b(\mathbb{R}^n)}.
\end{align*}
Noticing that $T\delta_{c_k}(x)=K(x-c_k)$ and integrating with polar coordinates, we have
\begin{align*}
	\frac{2}{\lambda}\sum_{k=1}^{N}a_k\int_{{B(c_k,nr_k)}\setminus B(c_k,r_k)}|T\delta_{c_k}(x)|\,dx &\leq \frac{2}{\lambda}\sum_{k=1}^N a_k\int_{{B(c_k,nr_k)}\setminus B(c_k,r_k)}\frac{|\Omega(x-c_k)|}{|x-c_k|^n}\,dx\\
&=\frac{2}{\lambda}\sum_{k=1}^{N}a_k\int_{S^{n-1}}|\Omega(\theta)|\int_{r_k}^{nr_k}\frac{1}{t^n}t^{n-1}\,dtd\sigma(\theta)\\
&=2\log(n)\bigg(\int_{S^{n-1}}|\Omega(\theta)|\,d\sigma(\theta)\bigg)\frac{1}{\lambda}\|\nu\|_{M_b(\mathbb{R}^n)}.
\end{align*}
Therefore
\begin{align*}
	\text{II}\leq \bigg(4C_1+2\log(n)\bigg(\int_{S^{n-1}}|\Omega(\theta)|\,d\sigma(\theta)\bigg)\bigg)\frac{1}{\lambda}\|\nu\|_{M_b(\mathbb{R}^n)}.
\end{align*}

We next bound $\text{III}$. Since $\text{III}\leq |\{h>\frac{1}{4}\}|+|\{h<-\frac{1}{4}\}|$ we will just bound the first term and note that the same bound holds for the second term. Take a compact set $F \subseteq \{h>\frac{1}{4}\}$. Then Chebyshev's inequality yields the bound 
$$
	\frac{1}{4}|F|< \int_F h(x)\,dx.
$$ 

We will bound $\int_F h(x)\,dx$ above. First, we move to the adjoint of $T$, $T^*$: 
$$
	\int_F h(x) \, dx=\int_{\mathbb{R}^n}\bigg(\sum_{k=1}^{N}\chi_{F\setminus B(c_k,nr_k)}T\chi_{E_k}(x)\bigg)\,dx=\sum_{k=1}^N\int_{E_k}T^*\chi_{F\setminus B(c_k,nr_k)}(x)\,dx.
$$
Next, we add and subtract $T^*\chi_{F\setminus B(x,(n-1)r_k)}(x)$ and apply the triangle inequality to obtain
\begin{align*}
	|T^*\chi_{F\setminus B(c_k,nr_k)}(x)| \leq |T^*\chi_{F\setminus B(c_k,nr_k)}(x)-T^*\chi_{F\setminus B(x,(n-1)r_k)}(x)|+|T^*\chi_{F\setminus B(x,(n-1)r_k)}(x)|.
\end{align*}
For the first term, we use the fact $F\cap(B(c_k,nr_k)\setminus B(x,(n-1)r_k)) \subseteq B(x,(n+1)r_k)\setminus B(x,(n-1)r_k)$ and integrate in polar coordinates:
\begin{align*}
|T^*\chi_{F\setminus B(c_k,nr_k)}(x)-T^*\chi_{F\setminus B(x,(n-1)r_k)}(x)|&\leq \int_{F\cap(B(c_k,nr_k)\setminus B(x,(n-1)r_k))}|K(y-x)|\,dy\\
&\leq \int_{B(x,(n+1)r_k)\setminus B(x,(n-1)r_k)}\frac{|\Omega(y-x)|}{|y-x|^n}\,dy\\
&=\bigg(\int_{S^{n-1}}|\Omega(\theta)|\,d\sigma(\theta)\bigg)\int_{(n-1)r_k}^{(n+1)r_k}\frac{1}{t^n}t^{n-1}\,dt\\
&\leq\log(3)\|\Omega \|_{L^1(S^{n-1},\sigma)}.
\end{align*}
Therefore 
\begin{align*}
	\sum_{k=1}^ N\int_{E_k}|T^*\chi_{F\setminus B(c_k,nr_k)}(x)-T^*\chi_{F\setminus B(x,(n-1)r_k)}(x)|\,dx \leq \log(3)\|\Omega \|_{L^1(S^{n-1},\sigma)} |E|.
\end{align*}
For the second term, by Lemma \ref{dimensionlessl2} and the Cauchy-Schwarz inequality we find
\begin{align*}
	\sum_{k=1}^N\int_{E_k}|T^*\chi_{F\setminus B(x,(n-1)r_k)}&(x)|\,dx\leq \int_E(T^*)^{\#}\chi_F(x)\,dx\\
&\leq |E|^{\frac{1}{2}}\|(T^*)^{\#}\chi_F\|_{L^2(\mathbb{R}^n)}\\
&\leq C_3\|\Omega \|_{L^1(S^{n-1},\sigma)}|E|^{\frac{1}{2}}|F|^{\frac{1}{2}}.
\end{align*}

Therefore
$$
	\frac{1}{4}|F|\leq \|\Omega \|_{L^1(S^{n-1},\sigma)} \left( \log(3) |E|+  C_3|E|^{\frac{1}{2}}|F|^{\frac{1}{2}}\right).
$$
If $|F|\leq |E|$, we obtain
$$
	|F|\leq 4\|\Omega \|_{L^1(S^{n-1},\sigma)}\left(\log(3) +  C_3\right)|E|.
$$
Otherwise, $|E|\leq |E|^{\frac{1}{2}}|F|^{\frac{1}{2}}$ and therefore 
$$
	\frac{1}{4}|F|^{\frac{1}{2}}\leq  \|\Omega \|_{L^1(S^{n-1},\sigma)}\left( \log(3) |E|^{\frac{1}{2}}+  C_3|E|^{\frac{1}{2}}\right),
$$
which says 
$$
	|F|\leq  32\|\Omega \|_{L^1(S^{n-1},\sigma)}^2 \left(\log(3)^2 + C_3^2\right) |E|.
$$
In either case, taking the supremum over such $F$ implies 
$$
	\bigg|\bigg\{h>\frac{1}{4}\bigg\}\bigg| \leq B \max\left\{\|\Omega \|_{L^1(S^{n-1},\sigma)},\|\Omega \|_{L^1(S^{n-1},\sigma))}^2\right\} \frac{1}{\lambda}\|\nu\|_{M_b(\mathbb{R}^n)},
$$
where 
$$
	B:=32 \bigg(\log(3)^2 +\max\{C_3^2,C_3\}\bigg).
$$
Hence 
$$
	\text{III}\leq 2B \max\left\{\|\Omega \|_{L^1(S^{n-1},\sigma)},\|\Omega \|_{L^1(S^{n-1},\sigma))}^2\right\} \frac{1}{\lambda}\|\nu\|_{M_b(\mathbb{R}^n)}.
$$
Putting the above estimates together, we obtain
\begin{align*}
	\sup_{\lambda>0}\lambda&|\{|T\nu|>\lambda\}|\leq \\&\left(1+4C_1+ \left(2B \max\left\{1,\|\Omega \|_{L^1(S^{n-1},\sigma)}\right\}+2\log(n)\right)\|\Omega \|_{L^1(S^{n-1},\sigma)} \right)\|\nu\|_{M_b(\mathbb{R}^n)}.
\end{align*}
We conclude the theorem with 
\begin{align*}
C_6&=1+4C_1,\\
C_7 &=2B,\quad\text{and} \\
C_8 &= 2.
\end{align*}
\end{proof}

We finish with a proof of Theorem \ref{pointmassweaktype}.  

\begin{proof}[Proof of Theorem \ref{pointmassweaktype}]
By Theorem \ref{pointmassweaktype1}, we have 
\begin{align*}
\sup_{\lambda>0}\lambda|\{|R_j\nu|>\lambda\}| \leq \left(C_6+\left(C_7  \max\left\{1,\|\Omega \|_{L^1(S^{n-1},\sigma)}\right\}+ C_8\log n\right)\|\Omega\|_{L^1(S^{n-1},\sigma)}\right) \|\nu\|_{M_b(\mathbb{R}^n)}
\end{align*}
while the observation that
\begin{align*}
\|\Omega \|_{L^1(S^{n-1},\sigma)} = \frac{2}{\pi}<1
\end{align*}
yields
\begin{align*}
	\sup_{\lambda>0}\lambda|\{|R_j\nu|>\lambda  \}| &\leq \left(C_6+C_7   + C_8\log n\right) \frac{1}{\lambda} \|\nu\|_{M_b(\mathbb{R}^n)}.
\end{align*}
Therefore the theorem holds with
\begin{align*}
C=\frac{C_6+C_7}{\log(2)}+ C_8.
\end{align*}
\end{proof}


\section*{Acknowledgments} 
Part of this work was written while the second author (CBS) was visiting the Nonlinear Analysis Unit in the Okinawa Institute of Science and Technology Graduate University.  He warmly thanks OIST for the invitation and hospitality.

The authors would also like to thank Brett Wick for inspiring conversations regarding this work.

\begin{bibdiv}
\begin{biblist}
\bib{A2011}{article}{
title={The weak type $(1,1)$ bounds for the maximal function associated to cubes grow to infinity with the dimension},
author={J. M. Aldaz},
journal={Ann. of Math. (2)},
volume={173},
date={2011},
number={2},
pages={1013--1023}
}

\bib{CZ}{article}{
   author={Calderon, A. P.},
   author={Zygmund, A.},
   title={On the existence of certain singular integrals},
   journal={Acta Math.},
   volume={88},
   date={1952},
   pages={85--139},
   issn={0001-5962},
   review={\MR{52553}},
   doi={10.1007/BF02392130},
}

\bib{deguzman}{book}{
   author={de Guzm\'{a}n, Miguel},
   title={Real variable methods in Fourier analysis},
   series={North-Holland Mathematics Studies},
   volume={46},
   note={Notas de Matem\'{a}tica [Mathematical Notes], 75},
   publisher={North-Holland Publishing Co., Amsterdam-New York},
   date={1981},
   pages={xiii+392},
   isbn={0-444-86124-6},
   review={\MR{596037}},
}

\bib{Grafakos1}{book}{
      author={Grafakos, L.},
       title={Classical Fourier analysis},
     edition={Third edition},
      series={Graduate Texts in Mathematics},
   publisher={Springer},
     address={New York},
        date={2014},
      volume={249}
}

\bib{Grafakos2}{book}{
      author={Grafakos, L.},
       title={Modern Fourier analysis},
     edition={Third edition},
      series={Graduate Texts in Mathematics},
   publisher={Springer},
     address={New York},
        date={2014},
      volume={250}
}

\bib{GS2019}{article}{
      author={Grafakos, L.},
      author={Stockdale, C.~B.},
       title={A limited-range Calder\'on-Zygmund theorem},
        date={2019},
     journal={Bull. Hellenic Math. Soc.},
      volume={63},
       pages={54\ndash 63},
}

\bib{IS2013}{article}{
title={Lower bounds for the weak type $(1,1)$ estimate for the maximal function associated to cubes in high dimensions},
author={A. S. Iakovlev},
author={J.-O. Str\"omberg},
journal={Math. Res. Lett.},
volume={20},
date={2013},
number={5},
pages={907--918}
}

\bib{IwaniecMartin}{article}{
   author={Iwaniec, Tadeusz},
   author={Martin, Gaven},
   title={Riesz transforms and related singular integrals},
   journal={J. Reine Angew. Math.},
   volume={473},
   date={1996},
   pages={25--57},
   issn={0075-4102},
   review={\MR{1390681}},
}

\bib{J2004}{article}{
title={Weak-type estimates for singular integrals and the Riesz transform},
author={P. Janakiraman},
journal={Indiana Univ. Math. J.},
volume={53},
date={2004},
number={2},
pages={533--555}
}

\bib{Loomis1946}{article}{
title={A note on the Hilbert transform},
author={L. H. Loomis},
journal={Bull. Amer. Math. Soc.},
volume={52},
number={12},
date={1946},
pages={1082--1086}
}

\bib{NTV1998}{article}{
title={Weak type estimates and Cotlar inequalities for Calder\'on-Zygmund operators on nonhomogeneous spaces},
author={F. Nazarov},
author={S. Treil},
author={A. Volberg},
journal={Internat. Math. Res. Notices},
volume={9},
date={1998},
number={9},
pages={463--487}
}

\bib{Stein82}{article}{
   author={Stein, E. M.},
   title={The development of square functions in the work of A. Zygmund},
   journal={Bull. Amer. Math. Soc. (N.S.)},
   volume={7},
   date={1982},
   number={2},
   pages={359--376},
   issn={0273-0979},
   review={\MR{663787}},
   doi={10.1090/S0273-0979-1982-15040-6},
}

\bib{Stein83}{article}{
   author={Stein, E. M.},
   title={Some results in harmonic analysis in ${\bf R}^{n}$, for
   $n\rightarrow \infty $},
   journal={Bull. Amer. Math. Soc. (N.S.)},
   volume={9},
   date={1983},
   number={1},
   pages={71--73},
   issn={0273-0979},
   review={\MR{699317}},
   doi={10.1090/S0273-0979-1983-15157-1},
}

\bib{SteinStromberg}{article}{
   author={Stein, E. M.},
   author={Str\"{o}mberg, J.-O.},
   title={Behavior of maximal functions in ${\bf R}^{n}$ for large $n$},
   journal={Ark. Mat.},
   volume={21},
   date={1983},
   number={2},
   pages={259--269},
   issn={0004-2080},
   review={\MR{727348}},
   doi={10.1007/BF02384314},
}

\bib{Stein86}{article}{
   author={Stein, E. M.},
   title={Problems in harmonic analysis related to curvature and oscillatory
   integrals},
   conference={
      title={Proceedings of the International Congress of Mathematicians,
      Vol. 1, 2},
      address={Berkeley, Calif.},
      date={1986},
   },
   book={
      publisher={Amer. Math. Soc., Providence, RI},
   },
   date={1987},
   pages={196--221},
   review={\MR{934224}},
}


\bib{S2020}{article}{
      author={Stockdale, C.~B.},
       title={A different approach to endpoint weak-type estimates for
  Calder\'on-Zygmund operators},
        date={2020},
     journal={J. Math. Anal. Appl.},
	volume={487},
	number={2},
	pages={124016}
}

\bib{S2019}{article}{
      author={Stockdale, C.~B.},
       title={A weighted endpoint weak-type estimate for multilinear
  Calder\'on-Zygmund operators},
        date={2019},
     journal={ArXiv e-prints},
}

\bib{SW2019}{article}{
      author={Stockdale, C.~B.},
      author={Wick, B.~D.},
       title={An endpoint weak-type estimate for multilinear Calder\'on-Zygmund
  operators},
        date={2019},
     journal={J. Fourier Anal. Appl.},
      volume={25},
      number={5},
       pages={2635\ndash 2652},
}

\bib{TMSoria}{article}{
   author={Trinidad Menarguez, M.},
   author={Soria, Fernando},
   title={Weak type $(1,1)$ inequalities of maximal convolution operators},
   journal={Rend. Circ. Mat. Palermo (2)},
   volume={41},
   date={1992},
   number={3},
   pages={342--352},
   issn={0009-725X},
   review={\MR{1230582}},
   doi={10.1007/BF02848939},
}
\end{biblist}
\end{bibdiv}

\end{document}